\documentclass[a4paper, 11pt,reqno]{amsart}

%
%
\usepackage[T1]{fontenc}      
\usepackage[foot]{amsaddr}    

%
%
\usepackage[british]{babel}   
\usepackage{csquotes}         
\usepackage[babel]{microtype} 

%
%
\usepackage{mathtools}        
\usepackage{amssymb}          
\usepackage{amsthm}           
\usepackage{amstext}          
\usepackage{mleftright}       
\usepackage{gensymb}          
\usepackage[makeroom]{cancel} 
\usepackage{mathdots}         
\usepackage{mathrsfs}         

%
%
\usepackage{stickstootext}
\usepackage[stickstoo,varbb]{newtxmath} 
\makeatletter
\DeclareFontFamily{U}{ntxmia}{\skewchar \font =127}
 \DeclareFontShape{U}{ntxmia}{m}{it}{
                        <-> \ntxmath@scaled ntxmia
                      }{}    
                      \DeclareFontShape{U}{ntxmia}{b}{it}{
                        <-> \ntxmath@scaled ntxbmia
                      }{}
\makeatother

%
%
\usepackage{graphicx}         
\usepackage{psfrag}           
\usepackage{caption}          
\usepackage{tikz}             
\usetikzlibrary{backgrounds}

%
%
\usepackage{booktabs}         

%
%
\usepackage{enumitem}         

%
%
\usepackage[margin=1in]{geometry} 

%
%
\usepackage[textsize=footnotesize]{todonotes}

%
%

\usepackage[numbers,sort]{natbib}

\makeatletter
\def\NAT@spacechar{~}
\makeatother


%
%
\usepackage{hyperref}              
\usepackage{cleveref}  
\hypersetup{colorlinks=true,
   citecolor=blue,
   filecolor=blue,
   linkcolor=blue,
   urlcolor=blue
}
\usepackage{url}

\crefname{figure}{figure}{figures}
\Crefname{figure}{Figure}{Figures}

%
%
\usepackage{algorithm}
\usepackage{algpseudocode}

\allowdisplaybreaks


\newtheorem{definition}{Definition}[section]
\newtheorem{claim}{Claim}

\newtheorem{proposition}[definition]{Proposition}
\newtheorem{theorem}[definition]{Theorem}
\newtheorem{corollary}[definition]{Corollary}
\newtheorem{lemma}[definition]{Lemma}

\newtheorem{question}[definition]{Question}

\newtheorem{remark}[definition]{Remark}

\newenvironment{claimproof}{%
\let\origqed=\qedsymbol%
\renewcommand{\qedsymbol}{$\blacktriangleleft$}%
\begin{proof}}{\end{proof}\let\qedsymbol=\origqed}

\numberwithin{equation}{section}


\renewcommand{\binom}[2]{\ensuremath{\mleft(\kern-.1em\genfrac{}{}{0pt}{}{#1}{#2}\kern-.1em\mright)}}    
\newcommand{\inbinom}[2]{\ensuremath{\bigl(\kern-.1em\genfrac{}{}{0pt}{}{#1}{#2}\kern-.1em\bigr)}} 

\newcommand*\nume{\ensuremath{\mathrm{e}}}




\makeatletter
\def\moverlay{\mathpalette\mov@rlay}
\def\mov@rlay#1#2{\leavevmode\vtop{%
  \baselineskip\z@skip \lineskiplimit-\maxdimen
  \ialign{\hfil$\m@th#1##$\hfil\cr#2\crcr}}}
\newcommand{\charfusion}[3][\mathord]{
    #1{\ifx#1\mathop\vphantom{#2}\fi
        \mathpalette\mov@rlay{#2\cr#3}
      }
    \ifx#1\mathop\expandafter\displaylimits\fi}
\makeatother
\newcommand{\cupdot}{\charfusion[\mathbin]{\cup}{\cdot}}



%


\newcommand{\COMMENT}[1]{}
\newcommand{\COMNEW}[1]{}
\renewcommand{\COMNEW}[1]{\footnote{\textcolor{red!70!black}{#1}}} 







\title[Powers of Hamilton cycles in dense graphs perturbed by a random geometric graph]{Powers of Hamilton cycles in dense graphs\linebreak{} perturbed by a random geometric graph}

\author[A.~Espuny D\'iaz]{Alberto Espuny D\'iaz}
\email{alberto.espuny-diaz@tu-ilmenau.de}
\address[Espuny D\'iaz]{Institut f\"ur Mathematik, Technische Universit\"at Ilmenau, 98684 Ilmenau, Germany.}
\author[J.~Hyde]{Joseph Hyde}
\email{josephhyde@uvic.ca}
\address[Hyde]{Mathematics and Statistics, University of Victoria, Victoria V8W 2Y2, Canada.}

\thanks{This research has been partially supported by the Carl Zeiss Foundation and by DFG grant PE 2299/3-1 (Alberto Espuny Díaz) and by the UK Research and Innovation Future Leaders Fellowship MR/S016325/1 and ERC Advanced Grant 101020255 (Joseph Hyde).}

\date{\today}



\begin{document}

\begin{abstract}
Let $G$ be a graph obtained as the union of some $n$-vertex graph $H_n$ with minimum degree $\delta(H_n)\geq\alpha n$ and a $d$-dimensional random geometric graph $G^d(n,r)$.
We investigate under which conditions for $r$ the graph $G$ will a.a.s.\ contain the $k$-th power of a Hamilton cycle, for any choice of $H_n$.
We provide asymptotically optimal conditions for $r$ for all values of $\alpha$, $d$ and $k$.
This has applications in the containment of other spanning structures, such as $F$-factors.

\end{abstract}
\maketitle

\section{Introduction}

\subsection{Randomly perturbed graphs}

Initiated by \citet{BFM03}, the study of randomly perturbed graphs involves taking a deterministic graph $H$ (usually with some minimum degree condition) and a random graph~$G$ (on the same vertex set) that both fail to have some property $P$, and seeing if the union of these graphs in fact has property $P$. 
In \cite{BFM03}, the property of interest is containing a Hamilton cycle, that is, a cycle covering all vertices of the graph.
A seminal result of \citet{Dirac52} asserts that any $n$-vertex graph ($n\geq3$) with minimum degree at least $n/2$ contains a Hamilton cycle, while in $G(n,p)$,\footnote{For $n\in\mathbb{N}$ and $p\in[0,1]$, $G(n,p)$ is the random graph obtained by considering a vertex set of size $n$ and adding each of the $\binom{n}{2}$ possible edges with probability $p$, independently of each other.} \citet{posa76}, and independently \citet{Kor77}, famously proved the asymptotic threshold for Hamiltonicity to be $\log n/n$, that is, there exists a constant $C > 0$ such that, if $p \geq C\log n/n$, then $G(n,p)$ asymptotically almost surely (a.a.s.)\footnote{By `asymptotically almost surely' we mean `with probability tending to $1$ as $n$ tends to infinity'.} contains a Hamilton cycle.
(Both these results are tight; in particular, for some constant $c > 0$, a.a.s.\ $G(n,p)$ is not even connected for $p \leq c\log n/n$.) 
Interpolating between these results, \citet{BFM03} proved that, for any $\alpha \in (0, 1/2)$, there exists a constant $C = C(\alpha)$ such that, for any $n$-vertex graph $H_n$ with minimum degree $\delta(H_n)\geq\alpha n$, the union $H_n \cup G(n,C/n)$ a.a.s.\ contains a Hamilton cycle (and this is also best possible, up to the value of $C$).
Observe how the threshold for Hamiltonicity in $G(n,p)$ is improved upon by a logarithmic factor in this result. 

In recent years, the study of randomly perturbed (hyper)graphs has seen significant interest.
Hamiltonicity has been considered in randomly perturbed directed graphs~\cite{BFM03,KKS16}, hypergraphs~\cite{HZ20, KKS16, MM18} and subgraphs of the hypercube~\cite{CEGKO20}. 
An increasing number of other properties and situations involving spanning structures have also been studied, including containing powers of Ha\-mil\-ton cycles~\cite{ADRRS21,ADR22,BMPP20, DRRS20, BPSS22}, $F$-factors\footnote{An $F$-factor is a collection of vertex-disjoint copies of $F$ covering all vertices of a graph.}~\cite{BTW19, BPSS20, HMT19+, BPSS21}, spanning trees~\cite{JK19, KKS17, BHKMPP19} and general bounded degree spanning graphs~\cite{BMPP20};
coloured randomly perturbed graphs containing rainbow spanning structures~\cite{AF19, AH21, BFP21, AHL21};
finding minimum weight structures in weighted randomly perturbed graphs~\cite{frieze21},
and playing maker-breaker~\cite{CHMP21makerbreaker} and waiter-client~\cite{CHMP21waiterclient} games on randomly perturbed graphs. 

All of these results deal with perturbing a (hyper/di)graph by some binomial random structure, such as $G(n,p)$, or its Erd\H{o}s-Rényi $G(n,m)$ counterpart.
Very recently, \citet{EG21} studied Hamiltonicity in graphs perturbed by a random regular graph, and \citet{Espuny21} also studied Hamiltonicity in graphs perturbed by a random geometric graph.
In this paper, it is the latter setting we are interested in. 

\subsection{Random geometric graphs}

We define (labelled) random $\ell_p$-geometric graphs in $d$ dimensions with radius $r$ as follows.
Let $V \coloneqq \{1, \ldots, n\}$ and let $X_1, \ldots, X_n$ be $n$ independent uniform random variables on $[0,1]^d$. 
Let $E\coloneqq\{\{i,j\}:\lVert X_i - X_j\rVert_p \leq r\}$. 
We denote the resulting graph $(V,E)$ by $G^d_p(n,r)$.
For simplicity, we focus on the Euclidean metric and write $G^d(n,r)$ for $G_2^d(n,r)$; our results, however, will extend to all $1\leq p\leq\infty$ (see \Cref{rem:lp}).

Let us briefly consider the history of Hamilton cycles in random geometric graphs.
The sharp threshold for connectivity in $G^2(n,r)$ was determined to be $r^* = (\log n/(\pi n))^{1/2}$ by \citet{Penrose97} (that is, for all $\epsilon > 0$, if $r \geq (1 + \epsilon)r^*$, then a.a.s.\ $G^2(n,r)$ is connected, and if $r < (1 - \epsilon)r^*$, then a.a.s.\ $G^2(n,r)$ is not connected).
Díaz, Mitsche and Pérez-Giménez~\cite{DMP07} showed the sharp threshold for $G^2(n,r)$ containing a Hamiltonian cycle is the same as the sharp threshold for connectivity.
This result was greatly strengthened independently by \citet{BBKMW11} and \citet{MPW11}, who showed that, for $d\geq 2$, a.a.s.\ $G^d(n,r)$ contains a Hamiltonian cycle precisely when it becomes $2$-connected, implying a sharp threshold of $(c\log n/n)^{1/d}$ for some constant $c = c(d)$.
See the works of \citet{Pen03, Pen16} for more information on random geometric graphs.

In this paper, we consider graphs perturbed by a random geometric graph.
Let us introduce the necessary definitions.
For a graph $H_n$ on $n$ vertices, we label $V(H_n)$ with $\{1, \ldots, n\}$ and set $V(H_n \cup G^d(n,r)) \coloneqq \{1, \ldots, n\}$.
Now, let $X_1, \ldots, X_n$ be $n$ independent uniform random variables on $[0,1]^d$ and set $E(H_n \cup G^d(n,r))\coloneqq E(H_n)\cup \{\{i,j\}:\lVert X_i - X_j\rVert \leq r\}$.
We refer to $H_n\cup G^d(n,r)$ as $H_n$ \emph{perturbed} by a $d$-dimensional random geometric graph.
The threshold for a property in this setting of randomly perturbed graphs is defined as follows.
For $\alpha\in[0,1]$, let $\mathcal{H}_\alpha^n$ denote the set of all $n$-vertex graphs $H_n$ with $\delta(H_n)\geq\alpha n$.
Let $P$ be a monotone property for graphs.
We say that a function $r^*_{\alpha,d}(n)$ is a $G^d(n,r)$-\emph{perturbed threshold for $P$} if
\begin{itemize}
    \item for every $H_n\in\mathcal{H}_\alpha^n$, $\lim_{n\to\infty}\mathbb{P}[H_n\cup G^d(n,r)\in P]=1$ whenever $r=\omega(r^*_{\alpha,d})$, and
    \item there exists some $H_n\in\mathcal{H}_\alpha^n$ such that $\lim_{n\to\infty}\mathbb{P}[H_n\cup G^d(n,r)\in P]=0$ whenever $r=o(r^*_{\alpha,d})$.
\end{itemize}
(The formal definition requires the use of sequences of graphs on an increasing number of vertices, on which the limit is taken; for simplicity, we avoid this formulation.
We also note that the definition above is analogous to the definition of perturbed thresholds in graphs perturbed by $G(n,p)$; to differentiate between both, we will refer to these as $G(n,p)$-perturbed thresholds.)

\citet{Espuny21} recently provided the threshold for Hamiltonicity in graphs perturbed by a random geometric graph.

\begin{theorem}\label{thm:hamcycle}
    For every integer\/ $d \geq 1$ and\/ $\alpha \in (0, 1/2)$,\/ $n^{-1/d}$ is a\/ $G^d(n,r)$-perturbed threshold for Hamiltonicity.
\end{theorem}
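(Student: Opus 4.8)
\textbf{Proof proposal for \Cref{thm:hamcycle}.}

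The plan is to prove the two halves of the perturbed threshold definition separately. For the \emph{lower bound} (the existence of a bad $H_n$ when $r = o(n^{-1/d})$), I would pick $H_n$ to be a complete bipartite graph $K_{\alpha n, (1-\alpha)n}$, or rather a graph that is the disjoint union of a clique on slightly fewer than $n/2$ vertices together with an independent set, joined appropriately so that $\delta(H_n) \geq \alpha n$ but $H_n$ has an independent set of size $(1-\alpha)n > n/2$. Since a Hamilton cycle has an independent set of size at most $\lfloor n/2 \rfloor$, the perturbation must destroy this large independent set. When $r = o(n^{-1/d})$, the expected number of edges of $G^d(n,r)$ is $\Theta(n^2 r^d) = o(n)$, so a.a.s.\ $G^d(n,r)$ has $o(n)$ edges; hence a.a.s.\ $H_n \cup G^d(n,r)$ still contains an independent set of size $> n/2$ (delete one endpoint of each random edge inside the independent set — only $o(n)$ vertices are removed), and so it is not Hamiltonian. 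One must check that the chosen $H_n$ really lies in $\mathcal{H}^n_\alpha$ for every $\alpha < 1/2$; a standard construction (a set $A$ of size $\lceil \alpha n \rceil$ inducing a clique, fully joined to the rest, with the remaining $n - \lceil \alpha n \rceil$ vertices independent) does the job.

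For the \emph{upper bound} I must show that for every $H_n \in \mathcal{H}^n_\alpha$, if $r = \omega(n^{-1/d})$ then a.a.s.\ $H_n \cup G^d(n,r)$ is Hamiltonian. The key structural input is that when $r = \omega(n^{-1/d})$, i.e.\ $n r^d \to \infty$, the random geometric graph is "locally dense": partitioning $[0,1]^d$ into a grid of $\ell^d$ small cubes with $\ell = \ell(n) \to \infty$ chosen so that each cube has side length $o(r)$ but still contains $\omega(1)$ points a.a.s., one gets that a.a.s.\ every cube contains roughly $n/\ell^d$ points and, crucially, any two points lying in the same or in geometrically adjacent cubes are adjacent in $G^d(n,r)$. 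Thus $G^d(n,r)$ a.a.s.\ contains a "blown-up grid": a spanning subgraph in which the vertex classes are the cube-contents, each class is a clique, and adjacent cubes induce complete bipartite graphs. In particular $G^d(n,r)$ a.a.s.\ contains long paths and, within any union of consecutive cubes along a space-filling-type ordering of the grid, a Hamilton path on those vertices.

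The strategy is then the classical absorption/connecting scheme for randomly perturbed graphs. First use the minimum-degree condition on $H_n$: since $\delta(H_n) \geq \alpha n$, a greedy or probabilistic argument produces in $H_n$ a small family of short paths (an "absorbing structure" or a set of $o(n)$ connecting vertices) with the property that any small set of leftover vertices can be absorbed. Then cover the bulk of the vertices by a bounded number of long paths living in the blown-up-grid part of $G^d(n,r)$ — this is where $r = \omega(n^{-1/d})$ is used, since each cube is a clique and adjacent cubes are complete to each other, so one can route a path through essentially all of any prescribed set of cubes. Finally, connect these long paths and the absorbing gadget into a single Hamilton cycle using edges of $H_n$: the $\Theta(n)$ degree in $H_n$ guarantees that the endpoints of the $O(1)$ paths have $\Omega(n)$ common-neighbourhood-type flexibility, so one can stitch everything together and absorb the $o(n)$ remaining vertices.

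The main obstacle, and the part requiring the most care, is the geometric step: showing that when $nr^d\to\infty$ (with no assumption that $nr^d \geq c\log n$, so $G^d(n,r)$ may well be disconnected) one can nonetheless a.a.s.\ cover \emph{all} vertices by a bounded number of paths in $G^d(n,r)$ alone, or more precisely by paths each of which lives in a connected "chunk" of the blown-up grid and whose endpoints can be matched up using $H_n$. Isolated or sparsely-connected regions of $G^d(n,r)$ (e.g.\ the few points that land near the centre of an otherwise empty cube, or cubes that happen to be underpopulated) must be handled by the minimum-degree graph $H_n$: one shows there are only $o(n)$ such "bad" vertices a.a.s., incorporates them into the absorbing structure, and verifies that the good part decomposes into $O(1)$ pieces each spanned by a path. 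Making the grid parameter $\ell$ and the fluctuation estimates work simultaneously — each cube non-empty a.a.s., most cubes of size $\Theta(n/\ell^d)$, and only $o(n)$ exceptional points — is the technical heart; once that is in place, the connecting and absorbing arguments follow the now-standard template for perturbed Hamiltonicity (cf.\ \cite{BFM03,Espuny21}).
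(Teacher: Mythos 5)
Your overall plan — partition $[0,1]^d$ into cells of side $\Theta(r)$, use $G^d(n,r)$ for local density inside and between adjacent cells, and use the minimum-degree graph $H_n$ both to absorb stray vertices and to supply global connectivity — is the same template as the paper's (which, for \Cref{thm:hamcycle} itself, defers to~\cite{Espuny21}, but reproves the generalisation to $k$-th powers in \Cref{thm:kpower} by exactly this method). Your lower-bound construction (a graph with an independent set of size $>n/2$ and minimum degree $\alpha n$, plus the observation that $G^d(n,r)$ a.a.s.\ has $o(n)$ edges when $r=o(n^{-1/d})$) also matches the paper's \Cref{prop:fewedges} and the $k=1$ case of the extremal example in \Cref{sec:f-factor}.

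However, there is a genuine quantitative gap in the upper-bound sketch. You assert that ``a.a.s.\ every cube contains roughly $n/\ell^d$ points'' and, later, that the good part ``decomposes into $O(1)$ pieces each spanned by a path,'' so that one can stitch them together using the $\Omega(n)$ common-neighbourhood flexibility of $O(1)$ endpoints. Neither claim is correct in the regime $r=\omega(n^{-1/d})$. With cells of side $\Theta(r)$ the expected occupancy is $\Theta(nr^d)$; if $nr^d\to\infty$ slowly (say $nr^d=\log\log n$, which is allowed), the probability a given cell is empty is about $\exp(-\Theta(nr^d))$, and there are $\Theta(n/(nr^d))$ cells, so the number of empty (or sparse) cells — and hence the number of connected components of the dense-cell adjacency graph — is $\omega(1)$, indeed potentially close to linear in $n$. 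Even in the paper's setting $nr^d\geq C$ for a large constant $C$, the component count is bounded only by $\nume^{-\Theta(C)}n$, a small but linear-in-$n$ quantity (see \Cref{claim:fewcomponents}). Consequently you cannot finish by one bounded application of a common-neighbourhood argument: you must perform up to $\Theta(\nume^{-\Theta(C)}n)$ connections and absorptions, and ensure these do not conflict with one another. This iterative bookkeeping is precisely the content of the paper's forbidden-cell scheme with the sets $\mathcal{F}$ and $\mathcal{F}^*$ (and, underlying it, the absorption idea from~\cite{Espuny21}). The rest of your sketch is sound, but without addressing the superconstant number of components the argument would fail at the ``stitching'' step.
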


\Cref{thm:hamcycle} follows from Theorem~1 and remarks in Section~4 in \cite{Espuny21}, and can be seen as an analogue of the original result of \citet{BFM03} for graphs perturbed by $G(n,p)$.

\subsection{Powers of Hamilton cycles}

For any integer $k\geq1$, the $k$-th power of a graph $G$ is obtained by adding an edge between any two vertices whose distance in $G$ is at most $k$.
Powers of Hamilton cycles have received much attention in the literature.

\citet{KSS96, KSS98} proved that, for $n$ sufficiently large, any $n$-vertex graph with minimum degree at least $kn/(k+1)$ contains the $k$-th power of a Hamilton cycle.
The threshold for the containment of the $k$-th power ($k\geq2$) of a Hamilton cycle in $G(n,p)$ is $n^{-1/k}$: this was determined by \citet{KO12} for $k\geq3$ (applying an earlier result of \citet{Ri00}) and, very recently, by \citet{KNP21} for $k=2$.
\citet{DRRS20} proved that, if $\delta(H_n)\geq kn/(k+1)$, then $H_n\cup G(n, C/n)$ a.a.s.\ contains a $(k+1)$-th power of a Hamilton cycle,
before \citet{NT21} strengthened this result, forcing the $(2k+1)$-th power of a Hamilton cycle in the same situation. 
\citet{ADRRS21}, and more recently \citet{ADR22}, expanded in several ways on the result in~\cite{DRRS20}.
Very recently, \citet{BPSS22} completely solved the problem for $H_n\cup G(n, p)$ containing the square of a Hamilton cycle, giving the $G(n,p)$-perturbed threshold when $H_n$ has minimum degree $\alpha n$ for $\alpha \in (0, 1/2)$.
Interestingly, this perturbed threshold exhibits a `jumping' behaviour in terms of $\alpha$, presenting an infinite number of `jumps' (such jumps had previously been observed in the $G(n,p)$-perturbed thresholds for the containment of clique~\cite{HMT19+} and cycle~\cite{BPSS21} factors in $H_n\cup G(n, p)$, but only finitely many).

In contrast with the behaviour in binomial random graphs, where the threshold is different for each $k\geq1$, the threshold for the containment of the $k$-th power of a Hamilton cycle in random geometric graphs is $(\log n/n)^{1/d}$ independently of the value of $k$.
Indeed, simply observe that, if $G^d(n,r)$ contains a Hamilton cycle $\mathcal{C}$, then, by the triangle inequality, $G^d(n,kr)$ must contain the $k$-th power of $\mathcal{C}$.
This simple argument cannot be used to extend \Cref{thm:hamcycle}, however, as any Hamilton cycle in this setting will use edges of $H_n$, which may have arbitrary lengths.

We extend \Cref{thm:hamcycle} in this paper, proving the following.

\begin{theorem}\label{thm:threshold}
For any integers\/ $d,k\geq1$ and\/ $\alpha\in(0,k/(k+1))$,\/ $n^{-1/d}$ is a\/ $G^d(n,r)$-perturbed threshold for the containment of the\/ $k$-th power of a Hamilton cycle.
\end{theorem}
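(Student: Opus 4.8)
We verify the two halves of the definition of a $G^d(n,r)$-perturbed threshold separately; the lower bound is the easy direction. \emph{Lower bound.} We exhibit a bad $H_n$. Fix $\epsilon\in\bigl(0,\,k/(k+1)-\alpha\bigr)$ and let $H_n$ be obtained from an independent set $I$ with $|I|=\lceil n/(k+1)\rceil+\lceil\epsilon n\rceil$ by adding every edge with at least one endpoint outside $I$; then each vertex of $I$ has degree $n-|I|$ and every other vertex has degree $n-1$, so $\delta(H_n)=n-|I|\ge\alpha n$ for $n$ large. If $r=o(n^{-1/d})$ then $\mathbb{E}[e(G^d(n,r))]=\binom n2\cdot\Theta(r^d)=O(n^2r^d)=o(n)$, so by Markov's inequality a.a.s.\ $e(G^d(n,r))=o(n)$; in particular $G^d(n,r)$ induces $o(n)$ edges on $I$. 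Deleting one endpoint of each of these edges leaves an independent set of $H_n\cup G^d(n,r)$ of size $|I|-o(n)>\lfloor n/(k+1)\rfloor$. Since the $k$-th power of $C_n$ has independence number $\lfloor n/(k+1)\rfloor$, a.a.s.\ $H_n\cup G^d(n,r)$ contains no $k$-th power of a Hamilton cycle.

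\emph{Upper bound: setup.} Fix $H_n\in\mathcal{H}_\alpha^n$ and $r=\omega(n^{-1/d})$; we may assume $r\le\sqrt d$, as otherwise $G^d(n,r)$ is already complete. Put $\mu\coloneqq nr^d=\omega(1)$ and tessellate $[0,1]^d$ into $M=\Theta(r^{-d})$ axis-parallel cubes of side $\ell\asymp r$, small enough that any two of the random points $X_1,\dots,X_n$ lying in a common cube, or in two cubes sharing a face, an edge or a corner, are within distance $r$; thus each cube, and the union of any two grid-adjacent cubes, induces a clique of $G^d(n,r)$. Call a cube \emph{good} if it contains at least $\tau=\tau(n)$ points, where $\tau\to\infty$ slowly while $\tau=o(\mu)$; since $\mu\to\infty$, a.a.s.\ all but $o(n)$ vertices lie in good cubes. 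Two structural facts will drive the argument. First, a deterministic pigeonhole observation: since $\alpha n/M=\Theta(\mu)\to\infty$, for $n$ large every vertex $v$ has at least $2k$ of its $H_n$-neighbours inside one common cube, hence inside one clique of $G^d(n,r)$, so $v$ lies in a clique of size $2k+1$ in $H_n\cup G^d(n,r)$; more generally $N_{H_n}(v)$ meets all but $o(M)$ cubes. Second, a double-counting argument using $\delta(H_n)\ge\alpha n$ shows that every good cube $c$ contains a $k$-clique $S\subseteq c$ with $\Omega(\alpha^k n)$ common $H_n$-neighbours, and that for any pair of good cubes $c,c'$ one can choose such an $S\subseteq c$ so that $\Omega(\alpha^k\mu)$ of these common neighbours lie in $c'$.

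\emph{Upper bound: assembly.} We build the $k$-th power of a Hamilton cycle by the absorption method. Using the first fact, every vertex $v$ has many constant-size absorbing gadgets $B$ (for instance $2k$ $H_n$-neighbours of $v$ inside one cube) with both $B$ and $B\cup\{v\}$ containing a $k$-th power of a path with the same pair of end $k$-cliques; a standard random selection then yields a $k$-th power of a path $A$ on $o(n)$ vertices that can absorb any sufficiently small vertex set. Next we cover the remaining good-cube vertices by vertex-disjoint $k$-th powers of paths, one per good cube (inside a clique of size $\ge\tau$ one realises the $k$-th power of a path on all of its vertices with the two end $k$-cliques prescribed arbitrarily). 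Finally we splice $A$ and all these cube-segments into a single $k$-th power of a Hamilton path and close it into a cycle: two segments in grid-adjacent cubes concatenate directly, since the tail clique of one and the head clique of the next together lie in a clique; and two segments in cubes far apart are joined by taking the tail of the earlier segment to be a $k$-clique $S$ as in the second fact and the head of the later segment to be $k$ common $H_n$-neighbours of $S$ lying in the later cube — then the union of these two $k$-cliques is again a clique (the cross-edges being $H_n$-edges), so the segments once more concatenate directly. The $o(n)$ vertices left over — those in bad cubes and any debris from the preceding steps — are absorbed into $A$.

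\emph{The main obstacle} is the connecting step, which is also why the $G(n,p)$-perturbed arguments cannot simply be transplanted. When $\alpha\le(k-1)/k$ the graph $H_n$ may have clique number smaller than $k$ — for instance $H_n$ could be complete $(k-1)$-partite — so one cannot ``walk'' through $k$-cliques using $H_n$-edges alone, and the common $H_n$-neighbourhood of an arbitrary $k$-clique may be empty; the scheme above circumvents this by always furnishing the $(k+1)$-clique windows of a connector from a clique of $G^d(n,r)$ and using $H_n$-edges only for the single ``strand'' that jumps a point-free gap, and only from one of the distinguished $k$-cliques with linearly many common $H_n$-neighbours. The difficulty is compounded when $\mu$ is small: then $G^d(n,r)$ has many sparse regions and, for $d=1$, the good cubes need not even form a connected part of the grid, so one must additionally treat the vertices of small geometric components as leftover and absorb them, and bound the number of long connectors so that the total leftover stays within the absorbing capacity of $A$. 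Pinning down ``all but $o(n)$ vertices lie in good cubes'' with the correct choice of $\tau$, together with the accompanying concentration estimates, are the remaining routine but delicate ingredients.
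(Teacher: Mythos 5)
Your lower bound is correct and, in fact, cleaner than the paper's: you take $H_n$ to be a complete graph minus a slightly-too-large independent set $I$ and use the fact that the $k$-th power of $C_n$ has independence number $\lfloor n/(k+1)\rfloor$, together with the paper's \Cref{prop:fewedges}. The paper instead uses a complete $(k+1)$-partite graph with one small class and counts vertex-disjoint $K_{k+1}$'s; both constructions work, and both make essential use of the same $o(n)$-edge bound.

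The upper bound sketch has the right architecture — tessellate into cells of side $\Theta(r)$ so that each cell is a clique of $G^d(n,r)$, use $H_n$-edges only to leap over sparse regions — but two of the load-bearing claims are not correct as stated. First, the assertion that ``for any pair of good cubes $c,c'$ one can choose a $k$-clique $S\subseteq c$ so that $\Omega(\alpha^k\mu)$ of its common $H_n$-neighbours lie in $c'$'' is false: for a fixed $S$ the number of common $H_n$-neighbours landing in a given cube is a random variable whose expectation is $\Theta(\gamma\mu)$, but one cannot force it to be positive for \emph{every} target cube $c'$ (and there are only $\binom{|c|}{k}$ choices of $S$ to vary over). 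The paper proves the correct, weaker, statement (\Cref{claim:fewsparse}\ref{claim:fewsparse3}): for each admissible $S$, a.a.s.\ all but exponentially few cells are ``$S$-dense''. This is why the paper cannot just connect cells pairwise on demand; it instead builds the connectivity structure iteratively ($\Gamma^*$ and the auxiliary tree $\mathcal{T}$), always choosing the next cell from the large majority that works and booking used cells into forbidden sets $\mathcal{F},\mathcal{F}^*$ so that later choices remain available. Similarly, ``$N_{H_n}(v)$ meets all but $o(M)$ cubes'' is presented as a deterministic pigeonhole fact, but it is a concentration statement (the paper's \Cref{claim:fewsparse}\ref{claim:fewsparse2}), not a consequence of $\delta(H_n)\geq\alpha n$ alone. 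Second, the global absorbing $k$-th power of a path $A$ that you invoke would itself have to be assembled from gadgets scattered across distant cubes, so its construction faces exactly the same connection obstacle; the paper sidesteps a global absorber entirely by reserving, for each vertex $v$ in a sparse cell, a set $T_v$ of $2k$ $H_n$-neighbours of $v$ inside a single dense cell and later threading the cycle so that $v$ can be inserted between the two halves of $T_v$ — absorption is done locally, per vertex, with no need to build or connect a separate absorbing structure. Filling these two gaps essentially forces you to reproduce the paper's iterative bookkeeping, so while the outline is sound, the sketch as written does not close.
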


Observe that the $G^d(n,r)$-perturbed threshold in \Cref{thm:threshold} is the same as that in \Cref{thm:hamcycle}.
Indeed, this accords with the behaviour of $G^d(n,r)$.
Note also that this improves upon the threshold in the purely random setting by a logarithmic factor.
The behaviour of the threshold is very different when compared with the same problem in graphs perturbed by $G(n,p)$: indeed, the $G^d(n,r)$-perturbed threshold is the same for all values of $k$, and there are no jumps in the behaviour of the threshold as a function of $\alpha$.

In order to prove \Cref{thm:threshold}, we will establish the following.

\begin{theorem}\label{thm:kpower}
For any integers\/ $d,k\geq1$ and\/ $\alpha\in(0,1)$, there exists a constant\/ $C$ such that the following holds.
Let\/ $H_n$ be an\/ $n$-vertex graph with minimum degree at least\/ $\alpha n$ and let\/ $r\geq(C/n)^{1/d}$.
Then, a.a.s.\ $H_n \cup G^d(n,r)$ contains the\/ $k$-th power of a Hamilton cycle.
\end{theorem}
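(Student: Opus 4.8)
\emph{Overview.}
The plan is to use the (sparse) random geometric graph to manufacture many vertex‑disjoint local cliques, and to use the density of $H_n$ to glue these cliques together into one $k$‑th power of a Hamilton cycle; the routine part is the manufacture of cliques, and the work is in the gluing, because the $k$‑th power of a Hamilton cycle must contain a copy of $K_{k+1}$ at \emph{every} point of the cycle, whereas $H_n$ on its own may be $K_{k+1}$‑free and $G^d(n,r)$ on its own is globally disconnected for this value of $r$. Fix a large constant $C=C(d,k,\alpha)$, to be chosen at the end; by monotonicity it suffices to treat $r=(C/n)^{1/d}$.

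\emph{Geometric skeleton.}
First I would partition $[0,1]^d$ into a grid of axis‑parallel subcubes of side $\rho\coloneqq c_d r$, with $c_d$ small enough that any two vertices lying in two subcubes within sup‑norm distance $k+2$ of one another are at Euclidean distance at most $r$; hence each subcube, and the union of any boundedly many consecutive subcubes, induces a clique in $G^d(n,r)$, and spatially adjacent subcubes are completely joined in it. There are $M=\Theta(n/C)$ subcubes, each containing a number of vertices distributed as a binomial with mean $\lambda\coloneqq n/M=\Theta(C)$. Call a subcube \emph{good} if it contains at least $\lambda/2$ vertices and \emph{bad} otherwise. Since a fixed subcube is bad with probability $\bigO(1)^{-\Omega(\lambda)}$, linearity of expectation together with a bounded‑differences inequality shows that a.a.s.\ the set $W$ of vertices in bad subcubes satisfies $|W|\le\varepsilon n$, where $\varepsilon=\varepsilon(C)\to 0$ as $C\to\infty$; we will take $C$ large enough that $\varepsilon$ is far smaller than $\alpha$ and than $1/C$. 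Next, fixing a Hamilton cycle of the grid graph on the subcubes (a boustrophedon traversal, with the grid side lengths chosen even) and merging each bad subcube into a spatially adjacent good subcube whenever their union still has small diameter disposes of all short runs of bad subcubes; write $W'\subseteq W$, $|W'|\le\varepsilon n$, for the vertices remaining in long runs of bad subcubes. Listing the vertices of each (merged) good subcube consecutively, along each maximal stretch of good subcubes, and invoking the elementary fact that \emph{a sequence of cliques each of size at least $k$, with consecutive cliques completely joined, spans the $k$‑th power of a path}, I obtain from $G^d(n,r)$ alone a family of at most $|W'|+1$ vertex‑disjoint $k$‑th powers of paths that together span $V\setminus W'$. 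Crucially, this family contains $\Theta(n)$ pairwise disjoint \emph{flexible spots}, by which I mean runs of $2k$ consecutive vertices all lying in a single good subcube: since such a run is a clique, a new vertex adjacent to all $2k$ of its vertices may be spliced into its middle without destroying any edge required by the power (here it is essential that the blocks are genuine cliques, not merely $k$‑th powers of paths, so that pairs pushed apart by an insertion remain adjacent).

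\emph{Gluing with $H_n$.}
It then remains to (i) absorb each vertex of $W'$ into its own flexible spot, and (ii) concatenate the $\bigO(\varepsilon n)$ path‑powers into a single $k$‑th power of a Hamilton cycle. For (i): a vertex $w\in W'$ has at least $(\alpha-\varepsilon)n$ neighbours in good subcubes, and only $\bigO(kn/C)$ of its neighbours can lie in subcubes containing fewer than $2k$ of them, so for $C$ large $w$ has $\Omega(\alpha n)$ neighbours lying in subcubes that each contain at least $2k$ of them; using the a.a.s.\ bound $\bigO(\log n)$ on the largest subcube, $w$ is thus eligible for $\Omega(n/\log n)$ subcubes. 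Distributing the at most $\varepsilon n$ vertices of $W'$ over the $\Theta(n/C)$ good subcubes so that no subcube receives more than $\bigO(C)$ of them --- possible by a Hall‑/flow‑type argument, as the total capacity is $\Theta(n)$ and each $w$ has $\Omega(n/\log n)$ choices --- and then reserving, inside each such subcube's block, disjoint length‑$2k$ stretches consisting of the relevant neighbours of the assigned vertices, lets me splice every $w\in W'$ into a dedicated flexible spot. For (ii): set aside $\bigO(\varepsilon n/\alpha)\ll M$ good subcubes in advance, and join consecutive path‑powers by short $k$‑th‑power paths of $H_n\cup G^d(n,r)$ that hop through these reserved subcubes, using each reserved subcube's clique to refresh a $K_{k+1}$ at every step of the route --- the same device that lets a $K_{k+1}$‑free graph participate in a power of a Hamilton cycle at all, and the reason $\bigO(1/\alpha)$ hops per connection suffice even for small $\alpha$. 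Finally, choose $C$ large enough for all the inequalities above.

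\emph{Main obstacle.}
The crux is exactly the interface bookkeeping of the last paragraph: every global connection must be threaded through a geometric clique, and neither $G^d(n,r)$ nor $H_n$ supplies $K_{k+1}$‑rich structure on its own, so one must control simultaneously how many $W'$‑vertices and how many path‑power endpoints are routed through each subcube --- and all of this with $C$, hence the size of every clique available, a constant independent of $n$, so that bad subcubes are an unavoidable constant fraction of the grid rather than a vanishing one.
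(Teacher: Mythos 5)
Your overall architecture is close to the paper's: a geometric partition into cells of side $\Theta(r)$, a dense/sparse dichotomy, $k$\textsuperscript{th} powers of paths built cell‑by‑cell from the cliques $G^d(n,r)$ provides locally, absorption of vertices in sparse cells through cells containing $2k$ of their $H_n$‑neighbours, and a final gluing of the local path‑powers using a small reserved set of cells. The absorption step (i) is essentially sound and matches the paper's treatment of $v$‑dense cells.

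The genuine gap is in the gluing step (ii). Connecting the end of one path‑power (say the last $k$ vertices $u_1,\ldots,u_k$, all in a cell $c$) to the start of another (the first $k$ vertices $v_1,\ldots,v_k$ in a cell $c'$ far away) through a reserved cell requires, in $H_n$, a \emph{bipartite} structure between $c$ and the reserved cell: you need $k$ vertices in $c$ and $k$ vertices in the reserved cell forming at least a ``staircase'' ($u_i\sim w_j$ for $i\le j$), so that every window of $k+1$ consecutive vertices in the spliced sequence is a clique. A single vertex $u$ with $\alpha n$ $H_n$‑neighbours cannot supply this, because the $k-1$ vertices preceding $u$ in $c$ also have to reach into the reserved cell; and for $\alpha<1-1/k$ the common $H_n$‑neighbourhood of $k$ arbitrary vertices in $c$ can be empty, so ``hopping'' by degrees alone does not work. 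Your proposal gestures at ``refreshing a $K_{k+1}$ at every step'' and at ``$\bigO(1/\alpha)$ hops'', but never says how the required $K_{k,k}$ (or $K_{2k,2k}$) between two specific cells is obtained. This is exactly where the paper does real work: its Lemma~3.1 shows that any $L$ vertices in one cell contain a $2k$‑subset with $\Omega(n)$ common $H_n$‑neighbours, and the resulting notion of an \emph{$S$‑dense} cell (a cell holding $\ge 2k$ of those common neighbours) is then shown, via Azuma, to apply to almost every cell. Without some analogue of this common‑neighbourhood lemma, the gluing cannot be carried out for small $\alpha$, and the proof does not go through.

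Two smaller points. First, the count ``at most $|W'|+1$ vertex‑disjoint path‑powers'' is off: the number of maximal good stretches along the boustrophedon is governed by the number of \emph{bad cells} (or runs thereof), not by the number of vertices in bad cells; this is easily fixed since a Chernoff bound controls the number of bad cells directly, but as written the bookkeeping does not close. Second, the absorption and gluing both compete for the same supply of vertices and cells (a cell may simultaneously be the target of several absorptions and also be a reserved gluing cell, and the first/last $k$ vertices of every cell's block are already committed to connecting consecutive cells in the snake); the paper resolves this with its explicit bookkeeping via $\mathcal F$, $\mathcal F^*$ and the constant $R=L+k\cdot 3^d$ ensuring every dense cell has enough slack. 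Your ``Hall‑/flow‑type argument'' would need to be set up to respect all of these interactions simultaneously, not just the absorption demand, and that is not spelled out.
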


This provides the upper bound for the threshold. 
We will provide a construction for the lower bound in \Cref{sec:f-factor}, after proving \Cref{thm:kpower}.

Our proof of \Cref{thm:kpower} builds on ideas which have been successful in proving results about Hamiltonicity in random geometric graphs~\cite{DMP07,BBKMW11,MPW11,Espuny21,BBPP2017,FP20}.
We partition the hypercube $[0,1]^d$ into smaller cubes, called \emph{cells}, of side comparable with $r$, and analyse the number of vertices that fall into each of these cells.
The side is chosen so that most cells will contain ``many'' vertices, which provides high local connectivity.
On the other hand, a global structure using the cells is used to obtain global connectivity.
In our case, the random geometric graph we are working with is below the threshold for connectivity, so the desired global structure cannot be obtained purely in $G^d(n,r)$; to achieve the desired global connectivity we will use some of the edges of $H_n$, adapting an absorption idea introduced in \cite{Espuny21}.

\subsection{Applications}

Our main result has a series of corollaries that extend several lines of research in randomly perturbed graphs into graphs perturbed by a random geometric graph.

The first of these lines deals with $F$-factors, for some fixed graph $F$.
See the works of \citet{BTW19}, \citet{HMT19+}, and \citet{BPSS20,BPSS21} for some results in this setting.
Observe that, for any fixed graph $F$, there must exist a constant $k$ such that any $|V(F)|$ consecutive vertices of a $k$-th power of a Hamilton cycle contain a copy of $F$. 
Therefore, the following is a direct consequence of \Cref{thm:kpower}.

\begin{corollary}\label{coro1}
For any integer\/ $d\geq1$, any\/ $\alpha\in(0,1)$, and any fixed non-empty graph\/ $F$, there exists a constant\/ $C$ such that the following holds.
Let\/ $H_n$ be an\/ $n$-vertex graph with minimum degree at least\/ $\alpha n$, where\/ $|V(F)|$ divides\/ $n$, and let\/ $r\geq(C/n)^{1/d}$.
Then, a.a.s.\ $H_n \cup G^d(n,r)$ contains an\/ $F$-factor.
\end{corollary}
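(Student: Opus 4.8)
The plan is to derive this directly from \Cref{thm:kpower} by choosing $k$ large enough that a $k$-th power of a Hamilton cycle already contains the desired $F$-factor as a subgraph. Concretely, set $f\coloneqq|V(F)|$. If $f=1$ the statement is trivial (each vertex is a copy of $F$), so assume $f\geq2$ and let $k\coloneqq f-1\geq1$. For this $k$ and the given $d$ and $\alpha$, let $C$ be the constant supplied by \Cref{thm:kpower}; this will be the constant in the corollary.

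Now let $H_n$ be an $n$-vertex graph with $\delta(H_n)\geq\alpha n$ and $f\mid n$, and let $r\geq(C/n)^{1/d}$. By \Cref{thm:kpower}, a.a.s.\ $H_n\cup G^d(n,r)$ contains the $k$-th power $\mathcal{C}^k$ of some Hamilton cycle $\mathcal{C}=v_1v_2\cdots v_nv_1$. Condition on this event. Since any $f=k+1$ vertices that are consecutive along $\mathcal{C}$ induce a copy of $K_f$ in $\mathcal{C}^k$, and $F$ is a subgraph of $K_f$, each block of $f$ consecutive vertices of $\mathcal{C}$ contains a copy of $F$.

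Finally, because $f\mid n$, the sets $B_i\coloneqq\{v_{(i-1)f+1},\ldots,v_{if}\}$ for $i\in\{1,\ldots,n/f\}$ partition $V(H_n\cup G^d(n,r))$ into $n/f$ blocks of $f$ consecutive vertices of $\mathcal{C}$. Choosing one copy of $F$ inside each $B_i$ gives a collection of vertex-disjoint copies of $F$ covering every vertex, i.e.\ an $F$-factor. Hence a.a.s.\ $H_n\cup G^d(n,r)$ contains an $F$-factor, as claimed.

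There is essentially no obstacle beyond \Cref{thm:kpower} itself; the only points to be careful about are excluding the degenerate case $f=1$ so that the hypothesis $k\geq1$ of \Cref{thm:kpower} is satisfied, and using the divisibility assumption $f\mid n$ to split $\mathcal{C}$ into equal blocks of consecutive vertices. One could instead invoke the weaker observation quoted just before the corollary — that for some $k$ any $f$ consecutive vertices of a $k$-th power of a Hamilton cycle span a copy of $F$ — but taking $k=f-1$, so that these $f$ vertices span a clique, makes the choice of $k$ completely explicit.
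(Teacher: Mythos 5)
Your proof is correct and takes essentially the same approach as the paper, which simply observes that for some $k$ any $|V(F)|$ consecutive vertices of a $k$-th power of a Hamilton cycle contain a copy of $F$ and then invokes \Cref{thm:kpower}. You make this explicit by taking $k=|V(F)|-1$ so that the blocks are cliques, and you correctly use the divisibility hypothesis to partition the cycle into $n/|V(F)|$ blocks.
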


In \Cref{sec:f-factor}, we establish that $n^{-1/d}$ is indeed a $G^d(n,r)$-perturbed threshold for containing an $F$-factor for all $\alpha \in (0, 1-1/\chi_{\mathrm{cr}}(F))$ (see \Cref{sec:f-factor} for the definition of $\chi_{\mathrm{cr}}(F)$).
This improves by a logarithmic factor the threshold for the containment of an $F$-factor in purely $G^d(n,r)$, for many choices of $F$ (in particular, for all $F$ which have no isolated vertices).

A different direction is that of universality.
We say that an $n$-vertex graph is $2$-universal if it contains every graph on $n$ vertices with maximum degree at most $2$, that is, every graph which is a union of cycles and paths.
The problem of determining when graphs perturbed by $G(n,p)$ are $2$-universal has been considered by \citet{Parc20} and fully resolved by \citet{BPSS22}.
In our setting, since the square of a Hamilton cycle is $2$-universal,  \Cref{thm:kpower} yields the following.

\begin{corollary}\label{coro2}
For any integer\/ $d\geq1$ and\/ $\alpha\in(0,1)$, there exists a constant\/ $C$ such that the following holds.
Let\/ $H_n$ be an\/ $n$-vertex graph with minimum degree at least\/ $\alpha n$ and let\/ $r\geq(C/n)^{1/d}$.
Then, a.a.s.\ $H_n \cup G^d(n,r)$ is\/ $2$-universal.
\end{corollary}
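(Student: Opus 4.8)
The plan is to deduce \Cref{coro2} directly from \Cref{thm:kpower} applied with $k=2$, combined with the (deterministic) observation that the square of a Hamilton cycle is $2$-universal. First I would let $C=C(d,2,\alpha)$ be the constant supplied by \Cref{thm:kpower} and fix any $n$-vertex graph $H_n$ with $\delta(H_n)\geq\alpha n$ and any $r\geq(C/n)^{1/d}$. Then \Cref{thm:kpower} gives that a.a.s.\ $H_n\cup G^d(n,r)$ contains the square $\cC^2$ of some Hamilton cycle $\cC=v_1v_2\cdots v_nv_1$, where $\cC^2$ has an edge between $v_i$ and $v_j$ whenever the cyclic distance between $i$ and $j$ is at most $2$. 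Since being $2$-universal is a monotone increasing property, it then suffices to show that every such $\cC^2$ is $2$-universal, and \Cref{coro2} follows.

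For the deterministic step, fix an arbitrary graph $L$ on $n$ vertices with $\Delta(L)\leq2$; such an $L$ is a vertex-disjoint union of paths (including trivial one-vertex paths) and cycles, with component orders $m_1,\dots,m_t$ satisfying $\sum_{j}m_j=n$. I would partition $V(\cC^2)=\{v_1,\dots,v_n\}$ into consecutive arcs $B_1,\dots,B_t$ of $\cC$ with $|B_j|=m_j$ (so that no $B_j$ wraps around the cycle). Inside each block $B_j=\{v_a,v_{a+1},\dots,v_{a+m_j-1}\}$ the vertices $v_i,v_{i'}$ are adjacent in $\cC^2$ precisely when $|i-i'|\leq2$, so $\cC^2[B_j]$ is isomorphic to the square $P_{m_j}^2$ of the path on $m_j$ vertices. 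It then remains to embed a path, respectively a cycle, on $m$ vertices into $P_m^2$: a path embeds via the identity, while for a cycle with $m\geq3$ one can use the Hamilton cycle of $P_m^2$ on vertices $u_1,\dots,u_m$ that ascends in steps of two and then descends in steps of two, namely $u_1u_3u_5\cdots u_mu_{m-1}u_{m-3}\cdots u_2u_1$ when $m$ is odd and $u_1u_3\cdots u_{m-1}u_mu_{m-2}\cdots u_2u_1$ when $m$ is even; in both cases consecutive indices differ by at most $2$. Carrying this out in every block produces a copy of $L$ inside $\cC^2$, as required.

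There is no genuine obstacle beyond \Cref{thm:kpower} itself: the argument above is purely combinatorial bookkeeping, and the only point needing a line of care is the explicit Hamilton cycle exhibited in $P_m^2$ (equivalently, one may simply cite the standard fact that the square of an $n$-vertex cycle contains every $n$-vertex graph of maximum degree at most two). In particular, the constant $C$ in \Cref{coro2} may be taken to be exactly the constant $C(d,2,\alpha)$ of \Cref{thm:kpower}.
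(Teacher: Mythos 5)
Your proposal is correct and takes exactly the same route as the paper: apply \Cref{thm:kpower} with $k=2$ and invoke the (deterministic) fact that the square of a Hamilton cycle is $2$-universal, which the paper simply records as known. You have additionally spelled out the elementary embedding argument behind that fact, but the underlying approach is identical.
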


Again, $n^{-1/d}$ is actually a $G^{d}(n,r)$-perturbed threshold for $2$-universality for all $\alpha\in(0,2/3)$ (every graph $H_n$ with $\delta(H_n) \geq2n/3$ is $2$-universal, as proved by \citet{AB93}).
The lower bound follows from the fact that this is a lower bound for the $G^{d}(n,r)$-perturbed threshold for the containment of a triangle-factor (see \Cref{sec:f-factor}).
As above, this improves on the threshold in the purely random setting by a logarithmic factor.

In more generality, we may consider the concept of bandwidth.
For $k \in \mathbb{N}$, we say a graph $G$ on $n$ vertices has bandwidth $k$ if we can label the vertices of $G$ with $\{1, \ldots, n\}$ such that for each edge $\{i,j\} \in E(G)$ we have $|i-j|\leq k$. 
Since the $k$-th power of a Hamilton cycle contains every graph of bandwidth at most $k$, \Cref{thm:kpower} immediately implies the following result.

\begin{corollary}\label{coro3}
For any integers\/ $d,k\geq1$ and\/ $\alpha\in(0,1)$, there exists a constant\/ $C$ such that the following holds.
Let\/ $H_n$ be an\/ $n$-vertex graph with minimum degree at least\/ $\alpha n$, let\/ $F$ be an\/ $n$-vertex graph of bandwidth at most\/ $k$, and let\/ $r\geq(C/n)^{1/d}$.
Then, a.a.s.\ $H_n \cup G^d(n,r)$ contains a copy of\/ $F$.
\end{corollary}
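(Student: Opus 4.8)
The plan is to derive \Cref{coro3} as an immediate consequence of \Cref{thm:kpower}, with essentially no additional work beyond recording the elementary embedding observation. The key point is purely combinatorial: if $F$ is an $n$-vertex graph of bandwidth at most $k$, then by definition we may label $V(F)$ with $\{1,\dots,n\}$ so that $|i-j|\le k$ for every edge $\{i,j\}\in E(F)$. On the other hand, if a graph $G$ on vertex set $\{v_1,\dots,v_n\}$ contains the $k$-th power of a Hamilton cycle, we may (after relabelling) assume the underlying Hamilton cycle is $v_1 v_2 \cdots v_n v_1$, so that $v_i v_j\in E(G)$ whenever the cyclic distance between $i$ and $j$ is at most $k$; in particular $v_iv_j\in E(G)$ whenever $|i-j|\le k$. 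Hence the map $i\mapsto v_i$ embeds $F$ into $G$.

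So the proof proceeds as follows. First I would fix $d,k\ge 1$ and $\alpha\in(0,1)$, and let $C=C(d,k,\alpha)$ be the constant provided by \Cref{thm:kpower}. Given any $n$-vertex graph $H_n$ with $\delta(H_n)\ge\alpha n$, any $n$-vertex graph $F$ of bandwidth at most $k$, and any $r\ge(C/n)^{1/d}$, \Cref{thm:kpower} guarantees that a.a.s.\ $H_n\cup G^d(n,r)$ contains the $k$-th power of a Hamilton cycle. I would then invoke the observation of the previous paragraph to conclude that, on this a.a.s.\ event, $H_n\cup G^d(n,r)$ contains a copy of $F$, which is precisely the assertion of \Cref{coro3}.

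There is no genuine obstacle here: all the difficulty is concentrated in \Cref{thm:kpower} itself, and \Cref{coro3} is merely the statement that the $k$-th power of a Hamilton cycle is a ``universal'' host for the class of bandwidth-$k$ graphs on the same number of vertices. The only mild point worth spelling out is the reduction from the cyclic distance condition defining the $k$-th power of a cycle to the linear distance condition defining bandwidth, which is immediate since linear distance bounds cyclic distance from above. I would therefore keep the write-up to a few lines, essentially reproducing the embedding argument above and citing \Cref{thm:kpower}.
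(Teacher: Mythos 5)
Your proof is correct and matches the paper exactly: the paper also derives \Cref{coro3} directly from \Cref{thm:kpower} via the observation that the $k$-th power of a Hamilton cycle contains every $n$-vertex graph of bandwidth at most $k$. The embedding argument you spell out (linear distance bounds cyclic distance) is precisely the intended justification, which the paper leaves implicit.
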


We remark that we do not try to optimise the constant $C$ which is obtained in \Cref{thm:kpower}, and it is unlikely that it is best possible.
By extension, the same is true of \Cref{coro1,,coro2,,coro3}.
It would be interesting to investigate whether $H_n \cup G^d(n,r)$ can have one of the properties previously mentioned, but not another.
For example, does $H_n \cup G^d(n,r)$ contain the square of a Hamilton cycle precisely when it becomes $2$-universal, or does $2$-universality appear substantially earlier?
To be more precise, for any $\alpha>0$ and $d\in\mathbb{N}$, do there exist positive constants $C_1<C_2$ such that, for any $n$-vertex graph $H_n$ with $\delta(H_n)\geq\alpha n$, a.a.s.\ $H_n\cup G^d(n,(C_1/n)^{1/d})$ is $2$-universal but, for some $n$-vertex graph $H_n$, a.a.s.\ $H_n\cup G^d(n,(C_2/n)^{1/d})$ does not contain the square of a Hamilton cycle?
A similar question is relevant for any pair of properties which exhibit the same $G^d(n,r)$-perturbed threshold.

\section{Preliminaries}\label{section2}

\subsection{Notation}

For any $n \in \mathbb{N}\cup\{0\}$, we denote $[n]\coloneqq\{i\in\mathbb{N}:1\leq i\leq n\}$ (in particular, $[0]=\varnothing$).
Given any set $S$ and any $k \in \mathbb{N}$, we write $S^{(k)}$ to denote the set of all subsets of $S$ of size $k$.
For parameters $a$ and $b$, whenever we claim that a statement holds for $0<a\ll b\leq 1$, called a \emph{hierarchy}, we mean that there exists an (unspecified) non-decreasing function $f\colon\mathbb{R}\to\mathbb{R}$ such that the claim holds for all $0<b\leq 1$ and for all $0<a\leq f(b)$.
This generalises naturally to longer hierarchies, and also to hierarchies where one parameter may depend on two or more other parameters.
A sequence of events $\{\mathcal{E}_n\}_{n\in\mathbb{N}}$ is said to hold \emph{asymptotically almost surely} (a.a.s.~for short) if $\mathbb{P}[\mathcal{E}_n]\to1$ as $n\to\infty$.
In all asymptotic statements, we will ignore rounding issues whenever these do not affect the arguments.

Most of our graph theoretic notation is standard.
The vertex set and edge set of a graph $G$ are denoted by $V(G)$ and $E(G)$, respectively.
We always consider labelled graphs, meaning that whenever we say that $G$ is an $n$-vertex graph we implicitly assume that $V(G)=[n]$.
If $G$ is a geometric graph (meaning here that each of its vertices is assigned to a position in $\mathbb{R}^d$ for some integer $d$), then $V(G)$ may interchangeably be used to refer to the set of positions which the vertices of $G$ are assigned to, and similarly the notation $v$ may refer to a vertex or its position.
We usually abbreviate the notation for edges $e=\{u,v\}$ as $e=uv$.
Given any vertex $v\in V(G)$, we define its \emph{neighbourhood} $N_G(v)\coloneqq\{u\in V(G):uv\in E(G)\}$ and its \emph{degree} $d_G(v)\coloneqq|N_G(v)|$.
We denote the minimum and maximum vertex degrees of $G$ by $\delta(G)$ and $\Delta(G)$, respectively.
Given a set $S\subseteq V(G)$, we write $N^{\cap}_G(S)\coloneqq\bigcap_{v\in S}N_G(v)$ for the \emph{common neighbourhood} of $S$.
Given a graph $G$ and two disjoint sets of vertices $A,B\subseteq V(G)$, we denote by $G[A]$ the graph on vertex set $A$ whose edges are all edges of $G$ which have both endpoints in $A$, and by $G[A,B]$ the graph on vertex set $A\cup B$ whose edges are all edges of $G$ which have one endpoint in $A$ and the other in $B$.
A \emph{path} $P$ is a graph whose vertices can be labelled in such a way that $E(P)=\{v_iv_{i+1}:i\in[|V(P)|-1]\}$.
If the endpoints of a path (the first and last vertices in the labelling described above) are~$u$ and~$v$, we sometimes refer to it as a \emph{$(u,v)$-path}.
Given a $(u,v)$-path $P$ and a $(v,w)$-path $P'$ such that $V(P)\cap V(P')=\{v\}$, we write~$PP'$ to denote the path obtained by concatenating $P$ and $P'$ (formally, this is the union graph of~$P$ and~$P'$).
If $P'$ is a single edge $vw$, we will write this as $Pw$.
Multiple concatenations will be written in the same way. 

\subsection{Azuma's inequality}

Let $\Omega$ be a set (we will later take $\Omega=[0,1]^d$), and let $f\colon \Omega^n\to\mathbb{R}$ be some function.
For some positive $L\in\mathbb{R}$, we say that $f$ is \emph{$L$-Lipschitz} if, for all $x,y\in\Omega^n$ such that $x$ and $y$ are identical in all but one coordinate, we have that $|f(x)-f(y)|\leq L$.
In order to bound the deviations of certain random variables, we consider the following consequence of Azuma's inequality (see, e.g.,~\cite[Corollary~2.27]{JLR}).

\begin{lemma}\label{lem:Azuma}
Let\/ $X_1,\ldots,X_n$ be independent random variables taking values in a set\/ $\Omega$.
Let\/ $f\colon\Omega^n\to\mathbb{R}$ be an\/ $L$-Lipschitz function.
Then, for any\/ $t\geq0$, the random variable\/ $X\coloneqq f(X_1,\ldots,X_n)$ satisfies that
\[\mathbb{P}[X\geq\mathbb{E}[X]+t]\leq \exp\left(-\frac{t^2}{2L^2n}\right)\qquad\text{ and }\qquad\mathbb{P}[X\leq\mathbb{E}[X]-t]\leq \exp\left(-\frac{t^2}{2L^2n}\right).\]
\end{lemma}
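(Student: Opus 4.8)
The plan is to realise $X=f(X_1,\ldots,X_n)$ as the terminal value of a Doob (vertex-exposure) martingale and then run the standard Azuma--Hoeffding argument; the only place where the hypotheses genuinely enter is in bounding the martingale increments, and there independence of the $X_i$ is essential. (Alternatively one may simply quote \cite[Corollary~2.27]{JLR}; here I sketch a self-contained argument.)

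First I would set $\mathcal{F}_i\coloneqq\sigma(X_1,\ldots,X_i)$ for $0\le i\le n$ (so $\mathcal{F}_0$ is trivial) and $Y_i\coloneqq\mathbb{E}[X\mid\mathcal{F}_i]$, so that $(Y_i)_{i=0}^n$ is a martingale with $Y_0=\mathbb{E}[X]$, $Y_n=X$, and increments $D_i\coloneqq Y_i-Y_{i-1}$ satisfying $\mathbb{E}[D_i\mid\mathcal{F}_{i-1}]=0$. The key point is that, conditionally on $\mathcal{F}_{i-1}$, each $D_i$ lies in an interval of length at most $L$. To see this, use independence to write $Y_i=g_i(X_1,\ldots,X_i)$, where $g_i(x_1,\ldots,x_i)\coloneqq\mathbb{E}[f(x_1,\ldots,x_i,X_{i+1},\ldots,X_n)]$. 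For fixed $x_1,\ldots,x_{i-1}$ and any $x_i,x_i'$, the integrand $f(\ldots,x_i,\ldots)-f(\ldots,x_i',\ldots)$ takes values in $[-L,L]$ pointwise by the Lipschitz hypothesis (its two arguments differ only in coordinate $i$), so the same holds for $g_i(x_1,\ldots,x_{i-1},x_i)-g_i(x_1,\ldots,x_{i-1},x_i')$. Since $Y_{i-1}=\mathbb{E}[g_i(X_1,\ldots,X_{i-1},X_i)\mid\mathcal{F}_{i-1}]$ (independence again, via Fubini), conditionally on $\mathcal{F}_{i-1}$ we have $D_i=g_i(X_1,\ldots,X_{i-1},X_i)-\mathbb{E}[g_i(X_1,\ldots,X_{i-1},X_i)\mid\mathcal{F}_{i-1}]$, where $g_i(X_1,\ldots,X_{i-1},\cdot)$ ranges over a set of diameter at most $L$; recentring to conditional mean $0$, the variable $D_i$ lies in an interval of length $\le L$ containing $0$.

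Next I would apply Hoeffding's lemma conditionally: any random variable $Z$ with $\mathbb{E}[Z]=0$ taking values in an interval of length $\le L$ satisfies $\mathbb{E}[\nume^{\lambda Z}]\le\nume^{\lambda^2 L^2/8}$ for every $\lambda\in\mathbb{R}$ (a one-line convexity estimate). Hence $\mathbb{E}[\nume^{\lambda D_i}\mid\mathcal{F}_{i-1}]\le\nume^{\lambda^2L^2/8}$, and peeling off the increments one at a time with the tower property gives $\mathbb{E}[\nume^{\lambda(X-\mathbb{E}[X])}]=\mathbb{E}\bigl[\nume^{\lambda(Y_{n-1}-Y_0)}\,\mathbb{E}[\nume^{\lambda D_n}\mid\mathcal{F}_{n-1}]\bigr]\le\cdots\le\nume^{n\lambda^2L^2/8}$. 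By Markov's inequality, $\mathbb{P}[X\ge\mathbb{E}[X]+t]\le\nume^{-\lambda t+n\lambda^2L^2/8}$ for all $\lambda>0$, and taking $\lambda=4t/(nL^2)$ yields $\mathbb{P}[X\ge\mathbb{E}[X]+t]\le\nume^{-2t^2/(nL^2)}$, which is in particular at most the claimed bound $\nume^{-t^2/(2L^2n)}$. The lower tail follows by applying this estimate to $-f$, which is also $L$-Lipschitz and has expectation $-\mathbb{E}[X]$, so $\mathbb{P}[X\le\mathbb{E}[X]-t]=\mathbb{P}[-X\ge\mathbb{E}[-X]+t]\le\nume^{-t^2/(2L^2n)}$.

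The only delicate point is the increment bound: one must check that the Lipschitz condition, stated for $f$ under a single-coordinate change, descends to the conditional oscillation of the martingale increment $D_i$, and this descent genuinely uses independence of the $X_i$ — it is exactly what lets one express $Y_i$ as the deterministic integral $g_i$ of the first $i$ coordinates. Beyond that, the proof is the textbook Azuma--Hoeffding computation.
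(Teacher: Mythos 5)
Your proof is correct: the paper itself gives no proof of this lemma, simply citing \cite[Corollary~2.27]{JLR}, and your argument is exactly the standard Doob (coordinate-exposure) martingale plus conditional Hoeffding's lemma computation underlying that citation, with the increment bound correctly deduced from the Lipschitz hypothesis via independence. In fact your optimisation yields the sharper McDiarmid-type bound $\nume^{-2t^2/(nL^2)}$, which, as you note, implies the stated bound $\nume^{-t^2/(2L^2n)}$.
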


\section{Proof of \texorpdfstring{\Cref{thm:kpower}}{Theorem 1.2}}\label{section3}

We begin with the following simple lemma, which will allow us to find large complete bipartite graphs later on.

\begin{lemma}\label{lemma:LargeCommonNeigh}
For every integer\/ $k\geq1$ and\/ $\alpha\in(0,1)$, there exist some\/ $\eta>0$ and positive integers\/ $L,n_0$ such that the following holds.
Let\/ $H_n$ be a graph on\/ $n\geq n_0$ vertices with\/ $\delta(H_n)\geq\alpha n$.
Let\/ $S\subseteq V(H_n)$ be any set of vertices with\/ $|S|\geq L$.
Then, there exists some\/ $T\subseteq S$ with\/ $|T|=k$ such that\/ $|N^{\cap}_{H_n}(T)|\geq\eta n$.
\end{lemma}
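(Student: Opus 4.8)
The plan is to reduce the statement to a simple averaging/pigeonhole argument, exploiting the fact that each vertex of $H_n$ has at least $\alpha n$ neighbours. First I would set up the counting: for each $v\in S$, its neighbourhood $N_{H_n}(v)$ has size at least $\alpha n$, so by considering the bipartite-like incidence between $k$-subsets of $S$ and vertices of $H_n$, we count pairs $(T,w)$ with $T\in S^{(k)}$ and $w\in N^{\cap}_{H_n}(T)$. The number of such pairs equals $\sum_{w\in V(H_n)}\binom{|N_{H_n}(w)\cap S|}{k}$, but the cleaner route is: $\sum_{w\in V(H_n)}|N_{H_n}(w)\cap S|=\sum_{v\in S}d_{H_n}(v)\geq\alpha n|S|$, so on average a vertex $w$ has about $\alpha|S|$ neighbours in $S$; by convexity of $x\mapsto\binom{x}{k}$ (Jensen), $\sum_{w}\binom{|N_{H_n}(w)\cap S|}{k}\geq n\binom{\alpha|S|}{k}$, which for $|S|\geq L$ with $L$ large enough (depending on $\alpha,k$) is at least, say, $(\alpha^k/2^k)\,|S|^k\, n/k!$, certainly at least $(\alpha/2)^k\binom{|S|}{k}n$ once $|S|$ is large.

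Next I would pigeonhole over the $\binom{|S|}{k}$ choices of $T$: since the total count of pairs is at least $(\alpha/2)^k\binom{|S|}{k}n$, there must exist some $T\in S^{(k)}$ with $|N^{\cap}_{H_n}(T)|\geq(\alpha/2)^k n$. Setting $\gamma\coloneqq(\alpha/2)^k$ (or any sufficiently small constant depending only on $\alpha$ and $k$) then gives the conclusion. I would choose $L=L(\alpha,k)$ large enough that the Jensen bound $n\binom{\alpha|S|}{k}\geq(\alpha/2)^k\binom{|S|}{k}n$ holds for all $|S|\geq L$ — this just needs $\alpha|S|\geq k$ and a comparison of the two polynomials in $|S|$, which holds for $|S|$ larger than some threshold depending on $\alpha$ and $k$ — and I would take $n_0=L$ (or $n_0$ large enough that $\gamma n\geq 1$, i.e.\ that the common neighbourhood is genuinely nonempty, though $\gamma n$ large is automatic).

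The only mild technical point, which is where a little care is needed rather than a real obstacle, is making the convexity step rigorous when $|N_{H_n}(w)\cap S|<k$ for some $w$ (so $\binom{|N_{H_n}(w)\cap S|}{k}=0$): one handles this by noting that at least a positive fraction of vertices $w$ have $|N_{H_n}(w)\cap S|\geq(\alpha/2)|S|\geq k$, for instance by a Markov-type argument on the complementary count $\sum_w(|S|-|N_{H_n}(w)\cap S|)=\sum_{v\in S}(n-d_{H_n}(v))\leq(1-\alpha)n|S|$, so at most $(1-\alpha/2)n$ vertices $w$ can have fewer than $(\alpha/2)|S|$ neighbours in $S$; restricting the sum to the remaining $\geq(\alpha/2)n$ vertices and applying $\binom{x}{k}\geq\binom{(\alpha/2)|S|}{k}$ on each gives the bound directly without invoking Jensen. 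Either way the argument is elementary and self-contained, and I expect the write-up to be short.
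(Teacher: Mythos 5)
Your proposal is correct, and it reaches the lemma via a more direct double count than the paper's. The paper first truncates $S$ to size exactly $L$, passes to the bipartite graph $H'=H_n[S,V(H_n)\setminus S]$, isolates the set $S'$ of vertices in $V(H_n)\setminus S$ with at least $k$ neighbours in $S$ (shown to have $|S'|\geq\alpha n/4$), and then double-counts in an auxiliary bipartite graph between $S'$ and $S^{(k)}$, using only the crude bound that each $v\in S'$ contributes at least one $T$. You instead double-count pairs $(T,w)$ with $T\in S^{(k)}$, $w\in N^\cap_{H_n}(T)$ directly on $H_n$ and average over $S^{(k)}$; the threshold you need on $|N_{H_n}(w)\cap S|$ is the stronger $(\alpha/2)|S|$ rather than $k$, but you handle the vertices falling short of it correctly via the Markov-type count on $\sum_w(|S|-|N_{H_n}(w)\cap S|)$, which shows at least $(\alpha/2)n$ vertices are good, and you then apply monotonicity of $\binom{\cdot}{k}$ only on those, sidestepping any issue with convexity at small arguments. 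The net effect: the paper's route avoids the convexity/small-intersection subtlety at the cost of the bipartite detour and a $\gamma$ of order $\alpha/\binom{L}{k}$; your route stays on $H_n$, doesn't need to fix $|S|=L$, and gives the mildly better $\gamma\approx(\alpha/2)(\alpha/4)^k$, at the cost of the extra Markov step. Both are valid, and your choices of $L=L(\alpha,k)$ and $n_0$ are the right ones to make the polynomial comparison $\binom{\lceil(\alpha/2)|S|\rceil}{k}/\binom{|S|}{k}\geq(\alpha/4)^k$ hold.
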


\begin{proof}
Let $L\geq4k/\alpha$, $n_0\geq2L/\alpha$ and $\eta\leq\alpha/(4\inbinom{L}{k})$.
Let $S\subseteq V(H_n)$ be any subset with $|S|=L$.

First, we restrict ourselves to the bipartite graph $H'\coloneqq H_n[S,V(H_n)\setminus S]$.
By our choice of parameters, for all $v\in S$ we have $d_{H'}(v)\geq\alpha n/2$\COMMENT{We have $d_{H'}(v)\geq\alpha n-L\geq\alpha n-\alpha n/2=\alpha n/2$.}, while for all $v\in V(H_n)\setminus S$ we have $d_{H'}(v)\leq L$.
Now let $S'\coloneqq\{v\in V(H_n)\setminus S:d_{H'}(v)\geq k\}$.
It follows by double counting the edges of $H'$ that $|S'|\geq\alpha n/4$.\COMMENT{We have that
\[\alpha Ln/2\leq|E(H')|\leq|S'|L+(n-|S'|)k\implies|S'|(L-k)\geq(\alpha L/2-k)n\implies|S'|\geq\frac{(\alpha L/2-k)n}{L-k}\geq\frac{1}{4}\alpha n,\]
where for the last inequality it suffices to have $L\geq4k/\alpha$.}

Now consider an auxiliary bipartite graph $H''$ with parts $S'$ and $S^{(k)}$ where, for any $v\in S'$ and $T\in S^{(k)}$, $vT\in E(H'')$ whenever $T\subseteq N_{H'}(v)$ (that is, whenever $v\in N^{\cap}_{H'}(T)$).
It follows by the definition of $S'$ that $d_{H''}(v)\geq1$ for every $v\in S'$, so by the bound on $|S'|$ we have that $|E(H'')|\geq\alpha n/4$.
But this means that the average degree of the vertices of $S^{(k)}$ in $H''$ is at least $\alpha n/(4\inbinom{L}{k})\geq\eta n$.
In particular, there must be some $T\in S^{(k)}$ with $d_{H''}(T)\geq\eta n$, as we wanted to prove.
\end{proof}

\begin{proof}[Proof of \Cref{thm:kpower}]
Let $0<1/n_0\ll1/C\ll\eta\ll1/L\ll1/d,1/k,\alpha\leq1$, where $\eta$ and $L\in\mathbb{N}$ are given by \Cref{lemma:LargeCommonNeigh} with input $2k$ and $\alpha$.\footnote{Note that our $n_0$ here may be significantly larger in value than the $n_0$ found in \Cref{lemma:LargeCommonNeigh}.}
Let $R\coloneqq L+k\cdot3^d$.
Throughout, we may assume that $n\geq n_0$, as we are aiming to show $H_n\cup G^d(n,r)$ a.a.s.~contains the $k$-th power of a Hamilton cycle.

Let $r\coloneqq(C/n)^{1/d}$, $s\coloneqq1/\lceil2\sqrt{d}/r\rceil$ and $K\coloneqq s^dn$.
Partition $[0,1]^d$ into \mbox{$d$-dimensional} hypercubes of side~$s$\COMMENT{So there are $s^{-d}$ cells.} (intuitively, $s$ is close to $r/(2\sqrt{d})$, and our choice of~$C$ ensures that~$K$ is sufficiently large for all the ensuing claims to hold; in particular, we have $1/K\ll\eta,1/R$).
We refer to each of the smaller $d$-dimensional hypercubes as a \emph{cell}, and denote the set of all cells by $\mathcal{C}$.
We say that two distinct cells $c_1,c_2\in\mathcal{C}$ are \emph{friends} if their boundaries intersect (in particular, if the boundaries share a single point, they count as intersecting).
It follows that each cell is friends with at most $3^d-1$ other cells.
Given any set of points $S\subseteq[0,1]^d$, we say that a cell is \emph{dense} in~$S$ if it contains at least~$R$ points from $S$, and we call it \emph{sparse} in $S$ otherwise.

Consider a labelling of the vertices of $V(H_n)$ as $v_1,\ldots,v_n$, and let $X_1,\ldots,X_n$ be independent uniform random variables on $[0,1]^d$.
Consider the random geometric graph $G=G^d(n,r)$ on the vertex set of $H_n$ obtained by assigning position $X_i$ to $v_i$.
The event that any of the $X_i$ lie in the boundary of any of the cells has measure $0$, so we may assume that each vertex lies in the interior of some cell.
Note that, by the definition of~$s$, if some $v\in V(H_n)$ lies in a cell $c\in\mathcal{C}$, then it is joined by an edge of $G$ to all other vertices in~$c$, as well as to all vertices in cells which are friends of~$c$.\footnote{Indeed, this follows from our choice of $s$ and the fact that the diagonal of a cell has length $\sqrt{d}s$.}\COMMENT{Given a fixed cell, its friends are those which are incident to it (i.e., their boundaries intersect in some way, corners are enough).
Thus, it suffices to have that two diagonals of cells are at most $r$ (the worst case would be given when we want to see if a vertex in a corner of a cell is connected to a vertex in the furthest corner of a friend which is as far as possible, that is, it only shares a corner with a cell that shares a corner with the first cell).
The diagonal of a cell has length $\sqrt{d}s$, so we want to enforce that $2\sqrt{d}s\leq r$, and this follows by the choice of $s$.}
For any vertex $v\in V(H_n)$, we say that a cell $c\in\mathcal{C}$ is \emph{$v$-dense} (with respect to $H_n$) if it contains at least $2k$ of the neighbours of $v$ in~$H_n$.
Otherwise, we say that it is \emph{$v$-sparse}.
We provide a similar definition for larger sets of vertices: given any $S\subseteq V(H_n)$, we say that a cell $c\in\mathcal{C}$ is \emph{$S$-dense} (with respect to $H_n$) if there is a subset $T\subseteq S$ of size $2k$ such that $|c\cap N^{\cap}_{H_n}(T)|\geq2k$, and we call it \emph{$S$-sparse} otherwise.\footnote{For clarity, note that a $v$-dense/$v$-sparse (resp.\ $S$-dense/$S$-sparse) cell need not contain $v$ (resp.\ $S$).}

\begin{claim}\label{claim:fewsparse}
The following properties hold a.a.s.:
\begin{enumerate}[label=$(\mathrm{\roman*})$]
    \item\label{claim:fewsparse1} The number of cells which are sparse in\/ $V(H_n)$ is at most\/ $\nume^{-K/2}n$.
    \item\label{claim:fewsparse2} For each vertex\/ $v\in V(H_n)$, the number of\/ $v$-sparse cells is at most\/ $\nume^{-\alpha K/2}n$.
    \item\label{claim:fewsparse3} For every set\/ $S\subseteq V(H_n)$ of size\/ $|S|=L$, the number of\/ $S$-sparse cells is at most\/ $\nume^{-\eta K/2}n$.
\end{enumerate}
\end{claim}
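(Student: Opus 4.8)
The plan is to prove each of the three parts by the same two-step recipe: fix a single cell, bound the probability that it is sparse (respectively $v$-sparse, $S$-sparse), and then combine a union bound over cells with a concentration argument over the choice of the set witnessing density. Throughout write $m\coloneqq s^{-d}$ for the number of cells, and recall that $s$ was chosen so that $K=s^dn=n/m$ is large (in particular $1/K\ll\gamma,1/R$), and $m=\Theta(n/C)$ so $m=O(n)$; this is what makes the bounds of the form $\nume^{-cK}n$ meaningful.

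For \ref{claim:fewsparse1}, fix a cell $c$. The number of vertices of $H_n$ landing in $c$ is a binomial random variable $\mathrm{Bin}(n,s^d)$ with mean $K$, so a standard Chernoff bound gives $\mathbb{P}[c\text{ sparse in }V(H_n)]=\mathbb{P}[\mathrm{Bin}(n,s^d)<R]\leq \nume^{-K/3}$, say, using $R\ll K$. Let $Z$ be the number of cells that are sparse in $V(H_n)$; then $\mathbb{E}[Z]\leq m\nume^{-K/3}$. Now view $Z$ as a function of the independent positions $X_1,\dots,X_n$; moving one point changes the occupancy count of at most two cells, hence changes $Z$ by at most $2$, so $Z$ is $2$-Lipschitz. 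Apply \Cref{lem:Azuma} with $t=\nume^{-K/2}n-\mathbb{E}[Z]$ (which is at least $\nume^{-K/2}n/2$ for $n$ large, since $m\nume^{-K/3}=o(\nume^{-K/2}n)$ fails --- wait, here one must be slightly careful, see below) to conclude $\mathbb{P}[Z\geq \nume^{-K/2}n]\leq \nume^{-t^2/(8n)}=o(1)$.

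For \ref{claim:fewsparse2}, fix $v\in V(H_n)$ and a cell $c$. Since $d_{H_n}(v)\geq\alpha n$, the number of neighbours of $v$ in $c$ stochastically dominates $\mathrm{Bin}(\alpha n,s^d)$, which has mean $\alpha K$, so $\mathbb{P}[c\text{ is }v\text{-sparse}]=\mathbb{P}[\,|c\cap N_{H_n}(v)|<2k\,]\leq\nume^{-\alpha K/3}$ by Chernoff, using $2k\ll\alpha K$. Letting $Z_v$ count the $v$-sparse cells, $\mathbb{E}[Z_v]\leq m\nume^{-\alpha K/3}$, and again $Z_v$ is a $2$-Lipschitz function of $(X_1,\dots,X_n)$ (moving one point alters at most two cells' neighbour-counts). \Cref{lem:Azuma} with $t=\nume^{-\alpha K/2}n-\mathbb{E}[Z_v]$ gives $\mathbb{P}[Z_v\geq\nume^{-\alpha K/2}n]\leq\nume^{-t^2/(8n)}$; since $m=O(n)$ and $\nume^{-\alpha K/2}n\gg m\nume^{-\alpha K/3}$ is \emph{not} automatic, one instead argues directly: with $\mathbb{E}[Z_v]$ much smaller than $\tfrac12\nume^{-\alpha K/2}n$ we may take $t\geq\tfrac12\nume^{-\alpha K/2}n$, giving a bound of $\nume^{-\Omega(\nume^{-\alpha K}n)}$, and finally a union bound over the $n$ choices of $v$ kills the $n$ factor because $\nume^{-\alpha K}n=\omega(\log n)$ (this is exactly where $1/K\ll\alpha$, i.e.\ $K$ not too large, is used --- equivalently $C$ is a constant so $K$ is a constant and $\nume^{-\alpha K}n=\Theta(n)$). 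For \ref{claim:fewsparse3}, fix $S$ with $|S|=L$. Apply \Cref{lemma:LargeCommonNeigh} (with input $2k$) to get $T_0\subseteq S$ with $|T_0|=2k$ and $|N^{\cap}_{H_n}(T_0)|\geq\gamma n$; then for a fixed cell $c$, $\mathbb{P}[\,|c\cap N^{\cap}_{H_n}(T_0)|<2k\,]\leq\mathbb{P}[\mathrm{Bin}(\gamma n,s^d)<2k]\leq\nume^{-\gamma K/3}$, and since $c$ being $S$-dense only requires \emph{some} $2k$-subset of $S$ to work, a fortiori $\mathbb{P}[c\text{ is }S\text{-sparse}]\leq\nume^{-\gamma K/3}$. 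Set $Z_S$ to be the number of $S$-sparse cells; it is again $2$-Lipschitz (the family $N^{\cap}_{H_n}(T)$ over all $2k$-subsets $T\subseteq S$ is fixed, and moving one $X_i$ affects at most two cells), so $\mathbb{E}[Z_S]\leq m\nume^{-\gamma K/3}$ and \Cref{lem:Azuma} gives $\mathbb{P}[Z_S\geq\nume^{-\gamma K/2}n]\leq\nume^{-\Omega(\nume^{-\gamma K}n)}$. Finally union bound over the at most $n^L$ sets $S$ of size $L$; since $L$ is a constant and $\nume^{-\gamma K}n=\Theta(n)$ dominates $L\log n$, the union bound succeeds.

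The routine Chernoff-for-a-single-cell estimates and the Lipschitz verification are straightforward; the one genuine subtlety --- and the place to be careful in the write-up --- is the bookkeeping of scales. The factor $n$ (from union-bounding over vertices $v$) and the factor $n^L$ (from union-bounding over sets $S$) must be absorbed by the Azuma tail, which is only possible because $K$ is a \emph{constant} (as $C$ is), so that $\nume^{-cK}n$ grows linearly in $n$ and hence beats any $\mathrm{poly}(\log n)$ or $O(\log n)$ term coming from the union bound; this is precisely the role of the hierarchy condition $1/C\ll\gamma\ll1/L$ and the induced $1/K\ll\gamma,1/R,\alpha$. I would state these scale comparisons explicitly once at the start of the proof of the claim and then invoke them in each of the three parts.
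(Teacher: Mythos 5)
Your overall strategy matches the paper exactly: bound the probability that a fixed cell is sparse (respectively $v$-sparse, $S$-sparse), observe that the count of such cells is a Lipschitz function of the point positions and apply \Cref{lem:Azuma}, then union-bound over the (at most polynomially many) choices of $v$ or $S$. The concluding "scales" discussion — that $K$ is a constant, so the Azuma tail is $\nume^{-\Theta(n)}$ and dominates any polynomial union bound — is correct and is indeed the point; for part~\ref{claim:fewsparse3} the paper applies \Cref{lemma:LargeCommonNeigh} to $S$ to find the $T_0$ with large common neighbourhood, exactly as you do.

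However, there is an unresolved gap in the middle, which you noticed and flagged ("wait, here one must be slightly careful, see below") but then never actually fixed. You use a per-cell sparse probability bound of the form $\nume^{-cK/3}$ (with $c\in\{1,\alpha,\gamma\}$). With $m=n/K$ cells this gives $\mathbb{E}[Z]\leq (n/K)\nume^{-cK/3}$, and you then want to take $t=\nume^{-cK/2}n-\mathbb{E}[Z]\geq\tfrac12\nume^{-cK/2}n$. But that requires $(n/K)\nume^{-cK/3}\leq\tfrac12\nume^{-cK/2}n$, i.e.\ $2\nume^{cK/6}\leq K$, which \emph{fails} once $K$ is a sufficiently large constant — and $K$ is taken large precisely because $1/K\ll\gamma,1/R$. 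You acknowledge this failure in part~\ref{claim:fewsparse1}, but in the "argues directly" passage for part~\ref{claim:fewsparse2} you simply assert "$\mathbb{E}[Z_v]$ much smaller than $\tfrac12\nume^{-\alpha K/2}n$" without deriving it from your per-cell estimate; that is exactly the step you had just flagged as not automatic. The fix is to sharpen the per-cell bound: with $R$ (resp.\ $2k$) fixed and $1/K\ll 1/R,\alpha,\gamma$, a tighter lower-tail estimate — either the entropy form of Chernoff or, as the paper does, the Poisson-type sum $\sum_{i<R}\tfrac{K^i}{i!}\nume^{-(1-o(1))K}\leq K^{R}\nume^{-(1-o(1))K}\leq\nume^{-K/2}$ (valid when $K/\log K\geq 2R$) — gives a per-cell probability at most $\nume^{-cK/2}$ rather than $\nume^{-cK/3}$. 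Then $\mathbb{E}[Z]\leq(n/K)\nume^{-cK/2}\ll\nume^{-cK/2}n$ and the rest of your argument goes through verbatim. Minor point: the count of sparse cells is in fact $1$-Lipschitz, not $2$-Lipschitz — moving one $X_i$ decreases one cell's occupancy by $1$ (can only create one new sparse cell) and increases another's by $1$ (can only destroy one sparse cell), so the net change lies in $\{-1,0,+1\}$; using $L=2$ is harmless for the conclusion but wasteful.
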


\begin{claimproof}
The proofs of the three statements proceed in the exact same way, with a few more details needed for \ref{claim:fewsparse3}.
We therefore omit the details for \ref{claim:fewsparse1} and \ref{claim:fewsparse2}, and only discuss the details for \ref{claim:fewsparse3}.\COMMENT{Here are the details for the other two cases:\\
\ref{claim:fewsparse1}
Let $f\colon([0,1]^d)^n\to\mathbb{Z}_{\geq0}$ be a function which, given a set $S$ of points $x_1,\ldots,x_n\in[0,1]^d$, returns the number of cells which are sparse in $S$.
Note that $f$ is a $1$-Lipschitz function.\\
For each $v\in V(H_n)$ and each cell $c\in\mathcal{C}$, we have that $\mathbb{P}[v\in c]=s^d$.
Thus, since the variables $X_i$ are independent, for a fixed cell $c$ we have that
\[\mathbb{P}[c\text{ is sparse}]=\sum_{i=0}^{R-1}\binom{n}{i}s^{di}(1-s^d)^{n-i}.\]
Now we substitute $s^d=K/n$ and use the bound $1-s^d\leq\nume^{-s^d}$.
With this, together with the fact that $i\leq R$ and we may choose $n$ arbitrarily large (in particular, $n-R\geq 2n/3$), we may bound
\[\mathbb{P}[c\text{ is sparse}]\leq\sum_{i=0}^{R-1}\frac{n^i}{i!}\frac{K^i}{n^i}\nume^{-2K/3}=\sum_{i=0}^{R-1}\frac{K^i}{i!}\nume^{-2K/3}.\]
Now, since we may assume $K>R$, the expression above is increasing in $i$, so we may further bound
\[\mathbb{P}[c\text{ is sparse}]\leq R\frac{K^{R-1}}{(R-1)!}\nume^{-2K/3}\leq K^{R}\nume^{-2K/3}\leq \nume^{-K/2}.\]
Observe that the second-to-last bound holds trivially for $K\geq R$, and the last one holds whenever $K/\log K\geq6R$.\\
Let $X\coloneqq f(X_1,\ldots,X_n)$ be the number of sparse cells, so $\mathbb{E}[X]\leq \nume^{-K/2}n/K$ (recall the number of cells is $s^{-d}=n/K$).
Since $f$ is \mbox{$1$-Lipschitz}, it follows by \Cref{lem:Azuma} that
\begin{align*}
    \mathbb{P}\left[X\geq\nume^{-K/2}n\right]&=\mathbb{P}\left[X\geq \nume^{-K/2}n/K+(K-1)\nume^{-K/2}n/K\right]\\
    &\leq\mathbb{P}\left[X\geq\mathbb{E}[X]+(K-1)\nume^{-K/2}n/K\right]\leq \exp\left(-(K-1)^2\nume^{-K}n/2K^2\right)=\nume^{-\Theta(n)}.
\end{align*}
\ref{claim:fewsparse2}
We proceed in a similar way.
Given any positive integer $m$, let $g_m\colon([0,1]^d)^m\to\mathbb{Z}_{\geq0}$ be a function which, given a set $S$ of $m$ points $x_1,\ldots,x_m\in[0,1]^d$, returns the number of cells which contain at most $2k-1$ points of $S$.
Clearly, $g_m$ is $1$-Lipschitz for every $m\in\mathbb{N}$.\\
Fix a vertex $v\in V(H_n)$ and let $m\coloneqq d_{H_n}(v)$.
Let $N_{H_n}(v)=\{v_{i_1},\ldots,v_{i_m}\}$.
For any cell $c$, we have that
\[
    \mathbb{P}[c\text{ is }v\text{-sparse}]=\sum_{i=0}^{2k-1}\binom{d_{H_n}(v)}{i}s^{di}(1-s^d)^{d_{H_n}(v)-i}.
\]
Now we substitute $s^d=K/n$ and use the bounds $\alpha n\leq d_{H_n}(v)\leq n$ and $1-s^d\leq\nume^{-s^d}$.
With this, together with the fact that $i<2k$ and we may choose $n$ arbitrarily large, we may bound 
\[\binom{d_{H_n}(v)}{i}\leq\binom{n}{i}\leq\frac{n^i}{i!}\qquad\text{ and }\qquad (1-s^d)^{d_{H_n}(v)-i}\leq\nume^{-s^d(\alpha n-i)}\leq\nume^{-2K\alpha/3}.\]
Altogether, this yields
\[\mathbb{P}[c\text{ is }v\text{-sparse}]\leq\sum_{i=0}^{2k-1}\frac{n^i}{i!}\frac{K^i}{n^i}\nume^{-2\alpha K/3}=\sum_{i=0}^{2k-1}\frac{K^i}{i!}\nume^{-2\alpha K/3}.\]
Now, since we may assume $K>2k$, the expression above is increasing in $i$, so we may further bound
\[\mathbb{P}[c\text{ is }v\text{-sparse}]\leq2k\frac{K^{2k-1}}{(2k-1)!}\nume^{-2\alpha K/3}\leq K^{2k}\nume^{-2\alpha K/3}\leq \nume^{-\alpha K/2}.\]
Observe that the second-to-last bound holds trivially for $K\geq 2k$, and the last one holds whenever $K/\log K\geq12k/\alpha$.\\
Now let $Y\coloneqq g_{m}(X_{i_1},\ldots,X_{i_m})$ be the number of $v$-sparse cells, so $\mathbb{E}[Y]\leq \nume^{-\alpha K/2}n/K$.
Since $g_m$ is $1$-Lipschitz, by \Cref{lem:Azuma} we conclude that
\begin{align*}
    \mathbb{P}\left[Y\geq\nume^{-\alpha K/2}n\right]&=\mathbb{P}\left[Y\geq \nume^{-\alpha K/2}n/K+(K-1)\nume^{-\alpha K/2}n/K\right]\\
    &\leq\mathbb{P}\left[Y\geq\mathbb{E}[Y]+(K-1)\nume^{-\alpha K/2}n/K\right]\leq\exp\left(-(K-1)^2\nume^{-\alpha K}n/2K^2\right)=\nume^{-\Theta(n)}.
\end{align*}
The statement follows by a union bound over all vertices $v\in V(H_n)$.}

Given any positive integer $m$, let $g_m\colon([0,1]^d)^m\to\mathbb{Z}_{\geq0}$ be a function which, given a set of $m$ points $x_1,\ldots,x_m\in[0,1]^d$, returns the number of cells which contain at most $2k-1$ of these points.
Clearly, $g_m$ is $1$-Lipschitz for every $m\in\mathbb{N}$.

Fix a set $S\subseteq V(H_n)$ with $|S|=L$.
By \Cref{lemma:LargeCommonNeigh}, there exists some $T\subseteq S$ with $|T|=2k$ such that $|N^{\cap}_{H_n}(T)|\geq\eta n$.
Therefore, for a cell to be $S$-dense, it suffices that it contains $2k$ of the vertices in $N^{\cap}_{H_n}(T)$.
Let $m\coloneqq|N^{\cap}_{H_n}(T)|$, and let $N^{\cap}_{H_n}(T)=\{v_{i_1},\ldots,v_{i_m}\}$.
For each $u\in V(H_n)$ and each cell $c\in\mathcal{C}$, we have that $\mathbb{P}[u\in c]=s^d$; thus, since the variables $X_i$ are independent, for any cell $c$ we have that\COMMENT{We have that 
\[\mathbb{P}[c\text{ is }S\text{-sparse}]\leq\sum_{i=0}^{2k-1}\binom{m}{i}s^{di}(1-s^d)^{m-i}.\]
Now we substitute $s^d=K/n$ and use the bounds $\eta n\leq m\leq n$ and $1-s^d\leq\nume^{-s^d}$.
With this, together with the fact that $i<2k$ and we may choose $n$ arbitrarily large (in particular, $\eta n-2k\geq2\eta n/3$), we may bound 
\[\binom{m}{i}\leq\binom{n}{i}\leq\frac{n^i}{i!}\qquad\text{ and }\qquad (1-s^d)^{m-i}\leq\nume^{-s^d(\eta n-i)}\leq\nume^{-2K\eta/3}.\]
Altogether, this yields
\[\mathbb{P}[c\text{ is }S\text{-sparse}]\leq\sum_{i=0}^{2k-1}\frac{n^i}{i!}\frac{K^i}{n^i}\nume^{-2\eta K/3}=\sum_{i=0}^{2k-1}\frac{K^i}{i!}\nume^{-2\eta K/3}.\]
Now, since we may assume $K>2k$, the expression above is increasing in $i$, so we may further bound
\[\mathbb{P}[c\text{ is }S\text{-sparse}]\leq2k\frac{K^{2k-1}}{(2k-1)!}\nume^{-2\eta K/3}\leq K^{2k}\nume^{-2\eta K/3}\leq \nume^{-\eta K/2}.\]
Observe that the second-to-last bound holds trivially for $K\geq 2k$, and the last one holds whenever $K/\log K\geq12k/\eta$.}
\[\mathbb{P}[c\text{ is }S\text{-sparse}]\leq\sum_{i=0}^{2k-1}\binom{m}{i}s^{di}(1-s^d)^{m-i}\leq\sum_{i=0}^{2k-1}\frac{K^i}{i!}\nume^{-2\eta K/3}\leq\nume^{-\eta K/2}.\]

Now let $Z\coloneqq g_{m}(X_{i_1},\ldots,X_{i_m})$ be the number of $S$-sparse cells, so $\mathbb{E}[Z]\leq \nume^{-\eta K/2}n/K$ (recall the number of cells is $s^{-d} = n/K$).
Since $g_m$ is $1$-Lipschitz, by \Cref{lem:Azuma} we conclude that\COMMENT{We have that
\begin{align*}
    \mathbb{P}\left[Z\geq\nume^{-\eta K/2}n\right]&=\mathbb{P}\left[Z\geq \nume^{-\eta K/2}n/K+(K-1)\nume^{-\eta K/2}n/K\right]\\
    &\leq\mathbb{P}\left[Z\geq\mathbb{E}[Z]+(K-1)\nume^{-\eta K/2}n/K\right]\leq\exp\left(-(K-1)^2\nume^{-\eta K}n/2K^2\right)=\nume^{-\Theta(n)}.
\end{align*}
}
\[\mathbb{P}[Z\geq\nume^{-\eta K/2}n]\leq \nume^{-\Theta(n)}.\]
The statement follows by a union bound over all sets $S\subseteq V(H_n)$ with $|S|=L$.
\end{claimproof}

Condition on the event that $G$ satisfies the properties of the statement of \Cref{claim:fewsparse}, which holds a.a.s.
Let $\mathcal{C}_{\mathrm{s}}$ be the set of cells which are sparse in $V(H_n)$, and let $\mathcal{C}_{\mathrm{d}}\coloneqq\mathcal{C}\setminus\mathcal{C}_{\mathrm{s}}$.
We define an auxiliary graph $\Gamma$ with vertex set $\mathcal{C}_{\mathrm{d}}$ where two cells are joined by an edge whenever they are friends.
In particular, $\Delta(\Gamma)\leq 3^d-1$.

\begin{claim}\label{claim:fewcomponents}
The number of connected components of\/ $\Gamma$ is at most\/ $\nume^{-K/3}n$.
\end{claim}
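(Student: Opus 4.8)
The plan is to bound the number of connected components of $\Gamma$ by counting the cells that can lie in a "small" component, and showing there are not too many of them. The key observation is that a cell $c\in\mathcal{C}_{\mathrm{d}}$ is dense in $V(H_n)$, meaning it contains at least $R\geq k\cdot 3^d$ vertices; and if $c$ lies in a component of $\Gamma$ of size at most, say, $R/k$ (a bound I will fix more carefully), then — loosely — the vertices in $c$ together with those in neighbouring dense cells cannot "escape", so the geometric structure is heavily constrained in a way that forces many sparse cells nearby. More precisely, I would argue that every component of $\Gamma$ that is genuinely small must be surrounded by sparse cells: the cells friends with the component but not in $\mathcal{C}_{\mathrm{d}}$ form a "sparse boundary". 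Since the total number of sparse cells is at most $\nume^{-K/2}n$ by \Cref{claim:fewsparse}\ref{claim:fewsparse1}, and each sparse cell borders at most $3^d-1$ cells, the total number of dense cells lying in small components is at most $(3^d-1)\nume^{-K/2}n$.

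**First I would** make precise what "small" means and handle the large components separately. Call a component of $\Gamma$ \emph{small} if it has at most $\nume^{K/6}$ cells, and \emph{large} otherwise. Since there are at most $n/K$ cells in total, there are at most $(n/K)/\nume^{K/6}\leq\nume^{-K/6}n$ large components — this already contributes at most $\nume^{-K/6}n$ to the count, which is comfortably below $\nume^{-K/3}n$ once $K$ is large (so I may need to adjust the threshold $\nume^{K/6}$ or the target exponent, but the arithmetic is routine given $1/K\ll 1$). For the small components, I would bound the \emph{number of cells lying in small components}: this dominates the number of small components. Here the boundary argument kicks in: a small component $\mathcal{D}$ of $\Gamma$ has the property that every cell friends with some cell of $\mathcal{D}$ is either in $\mathcal{D}$ or is sparse; otherwise $\mathcal{D}$ would not be a maximal connected set of dense cells. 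Since $\mathcal{D}$ is nonempty and $[0,1]^d$ has $n/K\gg\nume^{K/6}$ cells, $\mathcal{D}$ cannot be all of $\mathcal{C}_{\mathrm{d}}$, so (using connectivity of the cell adjacency graph on all of $[0,1]^d$) there is at least one sparse cell bordering $\mathcal{D}$.

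**The main work** is to turn "at least one bordering sparse cell per small component" into a bound on total cells in small components — one sparse cell per component is far too weak, since a component could be enormous. This is why I restrict attention to small components with the $\nume^{K/6}$ cap: I would instead argue that every \emph{cell} in a small component is within graph-distance $\nume^{K/6}$ (in the cell-adjacency graph) of a sparse cell, hence the cells in small components are all contained in the $\nume^{K/6}$-ball around the sparse cells. Each such ball in $\mathbb{Z}^d$ has size at most $(2\nume^{K/6}+1)^d\leq\nume^{dK/6+1}$, so the total is at most $\nume^{-K/2}n\cdot\nume^{dK/6+1}$ — but this is not small unless $d/6<1/2$, i.e.\ $d<3$, which fails. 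So I need a sharper packing bound: instead use that the cells in a single small component $\mathcal{D}$ number at most $\nume^{K/6}$ \emph{and} $\mathcal{D}$ has a sparse neighbour, but distinct components have distinct (indeed disjoint up to friendship) sparse neighbours. Counting: the number of small components is at most (number of sparse cells)$\times(3^d-1)$, and each contributes at most $\nume^{K/6}$ cells, so total cells in small components $\leq(3^d-1)\nume^{-K/2}n\cdot\nume^{K/6}=(3^d-1)\nume^{-K/3}n\cdot\nume^{-0}$... I would tune the cap to $\nume^{K/7}$ (say) so that $(3^d-1)\nume^{-K/2+K/7}n\leq\nume^{-K/3}n$ for $K$ large, recalling $3^d$ is a constant and $1/K\ll1/d$. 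Combining the large-component bound $\nume^{-K/6}n$ (retune to $\nume^{-K/2.1}n$ via a larger cap) and this small-component bound gives at most $\nume^{-K/3}n$ components overall.

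**The hard part will be** getting all three exponent budgets — the cap on "small", the $\nume^{-K/2}$ from sparse cells, and the $\nume^{dK/6}$-type packing factor — to simultaneously close below $\nume^{-K/3}n$; the clean way is to \emph{avoid} packing entirely and use the direct count (number of small components $\le 3^d\cdot\#\{\text{sparse cells}\}$, each of bounded size), which needs only $3^d\nume^{-K/2}\cdot(\text{cap})\le\nume^{-K/3}$, easily arranged since $1/K\ll1/d$. I also need to double-check the boundary claim: that a maximal connected set $\mathcal{D}$ of cells in $\mathcal{C}_{\mathrm{d}}$, with $\mathcal{D}\neq\mathcal{C}$, has a friend cell outside $\mathcal{C}_{\mathrm{d}}$ — this follows because the "friends" adjacency graph on all of $\mathcal{C}$ is connected (any two cells in $[0,1]^d$ are joined by a path of pairwise-friendly cells), so a nonempty proper subset must have an edge leaving it, and maximality in $\Gamma$ forces that edge to land on a sparse cell.
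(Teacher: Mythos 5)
Your core idea --- that every component of $\Gamma$ has a sparse cell on its boundary (by connectivity of the friends graph on all cells and maximality of components) --- is genuinely different from the paper's argument and, once cleaned up, is actually \emph{simpler}. The paper instead runs a ``walk towards a corner'' argument: it defines a small component as one with fewer than $1/(6s)=\Theta(n^{1/d})$ cells, then walks from a cell of a small component through dense cells towards a far corner of $[0,1]^d$; since the walk must stop before leaving a ball of radius $1/6$, the stopping cell has a full orthant ($2^d-1$) of sparse friends, yielding a factor $\frac{3^d-1}{2^d-1}$ instead of your $3^d-1$. Your single sparse friend is a weaker geometric fact but entirely sufficient here.

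However, as written your argument has two bugs. First, your cap on ``small'' is too low: to get (number of large components) $\leq (n/K)/c\leq\nume^{-K/3}n/2$ you need $c\geq 2\nume^{K/3}/K$, and neither $\nume^{K/6}$ nor $\nume^{K/7}$ satisfies this for large $K$ (the inequality $K\geq 2\nume^{K/6}$ fails). Any cap $c$ with $\nume^{K/2}\leq c\leq n/(2K)$ would do, but your tuning in the indicated range does not close. Second, you repeatedly multiply the count of small components by the cap. That is a count of \emph{cells} in small components, but the claim is about \emph{components}; the factor of $(\text{cap})$ should not appear, and with it the arithmetic does not (and cannot) work for most choices of cap.

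More importantly, your large/small split is not needed at all. If $\mathcal{C}_{\mathrm{s}}=\varnothing$, then $\Gamma$ has vertex set all of $\mathcal{C}$ and is connected (the friends graph on the full grid of cells is connected), so $\Gamma$ has one component, which is at most $\nume^{-K/3}n$ for $n$ large. Otherwise $\mathcal{C}_{\mathrm{d}}\subsetneq\mathcal{C}$, so \emph{every} component $\mathcal{D}$ of $\Gamma$ is a proper subset of $\mathcal{C}$; by connectivity of the friends graph on $\mathcal{C}$ there is a cell $c'\notin\mathcal{D}$ friends with some $c\in\mathcal{D}$, and by maximality of $\mathcal{D}$ as a component of $\Gamma$, $c'$ must be sparse. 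Each sparse cell has at most $3^d-1$ friends, so by double counting the number of components is at most $(3^d-1)|\mathcal{C}_{\mathrm{s}}|\leq(3^d-1)\nume^{-K/2}n\leq\nume^{-K/3}n$, using $1/K\ll1/d$. This is a valid proof and shorter than the paper's.
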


\begin{claimproof}
Say that a component of $\Gamma$ is \emph{large} if it contains at least $1/(6s)=n^{1/d}/(6K^{1/d})$ cells, and that it is \emph{small} otherwise.
Clearly, the number of large components of $\Gamma$ is at most $6(n/K)^{(d-1)/d}\leq\nume^{-K/3}n/2$, since $1/n\ll 1/K$.

Now consider any small component of $\Gamma$.
We claim that the number of sparse cells which are friends with some cell of this component is at least $2^d-1$.
This lower bound is achieved when the component consists of a single cell and this cell lies in a ``corner'' of the hypercube.

In order to prove the bound in full generality, choose an arbitrary cell $c_0$ of a small component of $\Gamma$ and choose a corner of the hypercube which is at distance at least $1/3$ from $c_0$ in each direction.
Let us assume without loss of generality that said corner is $\{1\}^d$.
Now, we follow an iterative process to define a sequence of cells.
For each $i\geq0$, if the hypercube $C_i$ of side length $2s$ having $c_i$ in a corner and growing in all directions towards $\{1\}^d$ contains some dense cell other than $c_i$, we choose one such cell $c_{i+1}\subseteq C_i$; otherwise, the process ends\COMMENT{The process would also end if $C_i$ is not contained in $[0,1]^d$, but this will not occur, as we discuss below.}.
Note that, whenever the process does not stop, $c_{i+1}$ is a translate of $c_i$ by a vector all whose coordinates are non-negative (at least one of them being positive) and at most $s$.
In particular, $c_{i+1}$ is a friend of $c_i$ and, thus, in the same component of $\Gamma$.
The process results in a sequence of distinct dense cells $c_0,c_1,\ldots,c_{t}$, where, for each $i\in[t]$, $c_i$ is a translate of $c_{0}$ by a vector all whose coordinates are non-negative and at most $is$.
Moreover, $c_0,c_1,\ldots,c_{t}$ all lie in the same component of $\Gamma$.
Now, we must have $t<1/(6s)$, as otherwise the component containing $c_0$ would be large.
But this means that $c_t$ is a translate of $c_0$ by a vector each of whose components is at most $1/6$, which in turn means that $c_t$ is at distance at least $1/6$ from each of the coordinate hyperplanes through $\{1\}^d$.
Thus, the process must have stopped because the hypercube $C_t$ of side length $2s$ having $c_t$ in a corner and growing in all directions towards $\{1\}^d$ contains only sparse cells (other than $c_t$).
There are exactly $2^d-1$ such cells, and all of them are friends with $c_t$, which proves our desired bound.

On the other hand, trivially no sparse cell can be friends with more than $3^d-1$ cells which lie in distinct components of $\Gamma$.
Further, by \Cref{claim:fewsparse}\ref{claim:fewsparse1}, there are at most $\nume^{-K/2}n$ sparse cells. Hence, a double counting argument\COMMENT{Consider a bipartite graph with vertex sets the sparse cells and the components of $\Gamma$, where two are joined by an edge if they are friends of each other.
By double counting the number of edges, it follows that the number of small components is at most
\[\frac{3^d-1}{2^d-1}\nume^{-K/2}n\leq2^d\nume^{-K/2}n<\nume^{-K/3}n/2,\]
where the first inequality holds since $d\geq1$ and the last inequality holds since $1/K\ll1/d$ (in fact, it suffices to have $K\geq6d+6$, since $2^{d+1}\nume^{-K/2}n<\nume^{d+1-K/2}n\leq\nume^{-K/3}n$ whenever $d+1\leq K/6$).} guarantees that the number of small components is at most 
\[\frac{3^d-1}{2^d-1}\nume^{-K/2}n\leq 2^d\nume^{-K/2}n\leq \nume^{-K/3}n/2,\]
and the proof is complete.
\end{claimproof}

We are now going to construct the $k$-th power of a Hamilton cycle in $H_n\cup G$.
Roughly speaking, for each connected component of $\Gamma$, we may find the $k$-th power of a cycle, as a subgraph of $G$, spanning all vertices which lie in the cells of this component.
Then, by using some edges of $H_n$, we will incorporate all leftover vertices into said powers of cycles, before combining the different structures into a single spanning $k$-th power of a cycle.
To make this process easier, however, it is better to proceed in a different order.
First (see step~\hyperlink{algostep1}{1} below), we choose the edges of $H_n$ which we will use to incorporate all vertices which lie in sparse cells.
Second (see step~\hyperlink{algostep2}{2}), we choose the edges of $H_n$ which will be used to combine the powers of cycles that we will find in each component of $\Gamma$.
Then (see step~\hyperlink{algostep3}{3}), we actually construct the $k$-th power of a spanning cycle in each component. Whilst constructing these $k$-th powers of spanning cycles we make sure that the endpoints of the edges of $H_n$ which we set aside earlier appear in an order that facilitates the incorporation of vertices in sparse cells and the grafting together of powers of cycles in different components.
Hence, we will be able to use these edges of $H_n$ to combine the different structures into a single spanning $k$-th power of a cycle.

We begin by setting up some notation.
Let $\boldsymbol{s}\coloneqq\bigcup_{c\in\mathcal{C}_{\mathrm{s}}}c\subseteq[0,1]^d$.
Given any cell $c\in\mathcal{C}_{\mathrm{d}}$, let $\Gamma(c)$ be the connected component of $\Gamma$ which contains $c$.
Further, we initialise sets $\mathcal{F}\coloneqq\mathcal{C}_{\mathrm{s}}$ and $\mathcal{F}^*\coloneqq\{c\in\mathcal{C}_{\mathrm{d}}:|V(\Gamma(c))|=1\}$.
By \Cref{claim:fewsparse}\ref{claim:fewsparse1} and \Cref{claim:fewcomponents} we have that $|\mathcal{F}\cup\mathcal{F}^*|\leq2\nume^{-K/3}n$.
Both $\mathcal{F}$ and $\mathcal{F}^*$ constitute sets of ``forbidden'' cells which we will avoid at certain times when connecting vertices from different cells via edges of~$H_n$.
These sets will be updated as we choose edges of $H_n$ to construct our spanning structure.
Indeed, each time we choose a set of edges of $H_n$ joining two distinct cells, we ensure afterwards that both cells belong to $\mathcal{F}$.
Roughly speaking, this will guarantee that the edges needed to join two given cells do not interact with the edges needed to join another pair of cells.
Furthermore, these edges are always chosen to join to a dense cell (hence why $\mathcal{C}_s\subseteq\mathcal{F}$).
In what follows, $\mathcal{F}^*$ will only ever contain dense cells. 
In particular, if in what follows a component has only one cell not in $\mathcal{F}$, then we will add this cell to $\mathcal{F}^*$.
Further, we will always have
\begin{equation}\label{equa:Fbound}
    |\mathcal{F}\cup\mathcal{F}^*|\leq \nume^{-K/6}n.
\end{equation}
Indeed, this bound will follow from the initial bound and the fact that we update $\mathcal{F}$ and $\mathcal{F}^*$ at most $2R\nume^{-K/3}n$ times, and each time the size of their union will increase by at most $3$.\COMMENT{$8R\nume^{-K/3}n\leq \nume^{-K/6}n$ since $1/K \ll 1/R$  (it suffices to have $K\geq6\log(8R)$).}

We first define some ``absorbing sets'' which will be used to incorporate all vertices in sparse cells into a $k$-th power of a Hamilton cycle.
We define these iteratively.
For each $v\in V(H_n)\cap\boldsymbol{s}$ (of which, by \Cref{claim:fewsparse}\ref{claim:fewsparse1}, there are at most $R|\mathcal{C}_{\mathrm{s}}|\leq R\nume^{-K/2}n$), we proceed as follows.
\begin{enumerate}[label=\arabic*.]
    \item \hypertarget{algostep1}{Choose} an arbitrary $v$-dense cell $c'(v)\in\mathcal{C}\setminus(\mathcal{F}\cup\mathcal{F}^*)$; note that such a cell exists by \Cref{claim:fewsparse}\ref{claim:fewsparse2} and \eqref{equa:Fbound}.\COMMENT{By \eqref{equa:Fbound}, we have that $|\mathcal{C}\setminus(\mathcal{F}\cup\mathcal{F}^*)|\geq s^{-d}-\nume^{-K/6}n\geq 3s^{-d}/4$ (note, furthermore, that we only choose dense cells because $\mathcal{C}_{\mathrm{s}}\subseteq\mathcal{F}$).
    By \Cref{claim:fewsparse}\ref{claim:fewsparse2}, the number of $v$-dense cells is at least $s^{-d}-\nume^{-\alpha K/2}n\geq 3s^{-d}/4$.
    Therefore, there must be some cell which belongs to both sets.
    This same argument will be used repeatedly, implicitly.}
    Choose any set $T_v\subseteq N_{H_n}(v)\cap c'(v)$ of size $2k$,\COMMENT{Such a set exists by definition, since $c'(c)$ is $v$-dense.} and split it into two equal-sized sets $T_v^1$ and $T_v^2$.
    Note that $G[T_v]$ is a complete graph, and so $(H_n\cup G)[\{v\}\cup T_v]$ is also complete.
    Then, add $c'(v)$ to $\mathcal{F}$.\COMMENT{The reason for this set is as follows. When choosing edges of $H_n$, we always choose sets of edges such that their endpoints form a clique in the cell $c'(v)$. Now, if $c'(v)$ contained endpoints of several sets of edges of $H_n$, then it could be that the edges inside the cell do not form a ``linear clique-forest''. In such a case, our proof would not work. To avoid this, we simply make sure that no cell is used for more than one set of edges of $H_n$ joining it to other cells.}
    Moreover, if $|V(\Gamma(c'(v)))\setminus\mathcal{F}|=1$, add this remaining cell to $\mathcal{F}^*$.\COMMENT{This is done to ensure that we connect everything.
    Indeed, if we do not avoid this cell in the future, we might get stuck with a component which only contains e.g. two dense cells.
    This is the main reason to work with $\mathcal{F}^*$.}
\end{enumerate}
Once this process is finished, let $F_{\mathrm{s}}\coloneqq\bigcup_{v\in V(H_n)\cap\boldsymbol{s}}T_v$ and $\mathcal{A}_{\mathrm{s}}\coloneqq\mathcal{F}\setminus\mathcal{C}_{\mathrm{s}}$.
Note that $\mathcal{A}_{\mathrm{s}}$ corresponds precisely to the set of cells which contain a set $T_v$, for some $v\in V(H_n)\cap\boldsymbol{s}$.

Consider an auxiliary graph $\Gamma^*$ which we initiate as $\Gamma$.
We are next going to modify this graph $\Gamma^*$ into a connected graph.
We will simultaneously construct an auxiliary tree $\mathcal{T}$ whose vertex set is the set of components of $\Gamma$, which we now initialise as an empty graph.
We will update $\Gamma^*$ in $t-1$ steps, where $t\leq \nume^{-K/3}n$ is the number of components of~$\Gamma$ (see \Cref{claim:fewcomponents}).
In each of these steps, we will add exactly one edge to $\Gamma^*$, connecting two of its components, and exactly one edge to $\mathcal{T}$ joining the same components.
The auxiliary edges of $\Gamma^*$ will correspond to where we will later connect the $k$-th powers of cycles which we will construct in each component; we build the structure necessary for this at the same time as we update $\Gamma^*$.
Our initial choice of $\mathcal{F}$ and $\mathcal{F}^*$, together with how we have and will update them, will be crucial in guaranteeing that the upcoming process can be carried out.
In particular, throughout this process we will satisfy the following three properties:
\begin{enumerate}[label=(P\arabic*)]
    \item\label{prop1} $\mathcal{F}$ and $\mathcal{F}^*$ are disjoint;
    \item\label{prop2} a component $\gamma$ of $\Gamma^*$ is contained in $\mathcal{F}\cup\mathcal{F}^*$ if and only if $|V(\gamma)\setminus\mathcal{F}|=1$, and
    \item\label{prop3} if a component of $\Gamma^*$ intersects $\mathcal{F}^*$, then it is fully contained in $\mathcal{F}\cup\mathcal{F}^*$.
\end{enumerate}
Observe that these three properties hold at the end of step~\hyperlink{algostep1}{1} by construction.
Given any cell $c\in\mathcal{C}_{\mathrm{d}}$, let $\Gamma^*(c)$ denote the connected component of~$\Gamma^*$ which contains~$c$.
Initialise a set of vertices $F_{\mathrm{d}} \coloneqq \varnothing$.
For each $i\in[t-1]$, we proceed as follows.
\begin{enumerate}[label=\arabic*.,start=2]
\item \hypertarget{algostep2}{Choose} a smallest component $\gamma$ of $\Gamma^*$, and choose an arbitrary cell $c\in V(\gamma)\setminus\mathcal{F}$ (which exists since $V(\gamma)\nsubseteq\mathcal{F}$ by \ref{prop1} and \ref{prop2}).
Let $S\subseteq V(H_n)\cap c$ be an arbitrary set of size $L$.
Choose an arbitrary $S$-dense cell $c'=c'(c)\in\mathcal{C}\setminus(\mathcal{F}\cup\mathcal{F}^*\cup V(\gamma))$; its existence follows from \Cref{claim:fewsparse}\ref{claim:fewsparse3}, \eqref{equa:Fbound} and the fact that $\gamma$ is a smallest component of~$\Gamma^*$.\COMMENT{The fact that $\gamma$ is a smallest component of $\Gamma^*$ means that it has at most $s^{-d}/2$ cells.
Then, we are removing from consideration at most $s^{-d}/2+\nume^{-K/6}n+\nume^{-\eta K/2}n<3s^{-d}/4$ cells, so there are lots of cells to choose from.}
In particular, by \ref{prop2}, it follows that\COMMENT{$\Gamma^*(c)$ and $\Gamma^*(c')$ are vertex disjoint. The first one is missing at least one vertex, and the second one, since it avoids $\mathcal{F}^*$, must be missing at least two vertices.}
\begin{equation}\label{eq:newprop}
    |(V(\Gamma^*(c))\cup V(\Gamma^*(c')))\setminus\mathcal{F}|\geq3,
\end{equation}
and by \ref{prop3}, that
\begin{equation}\label{eq:newprop2}
    V(\Gamma^*(c'))\cap\mathcal{F^*}=\varnothing.
\end{equation}

By the definition of an $S$-dense cell, there exist sets $T_c\subseteq V(H_n)\cap c$ and $T_{c'}\subseteq V(H_n)\cap c'$ with $|T_c|=|T_{c'}|=2k$ such that $H_n[T_c,T_{c'}]$ is a complete bipartite graph.
Note that it follows that $(H_n\cup G)[T_c\cup T_{c'}]$ is a complete graph.
Split the sets $T_c$ and $T_{c'}$ into equal sized parts $T_c=T_c^1\cup T_c^2$ and $T_{c'}=T_{c'}^1\cup T_{c'}^2$.
Add all vertices in $T_c$ and $T_{c'}$ to $F_{\mathrm{d}}$.
Add the edge $\{c,c'\}$ to $\Gamma^*$, so now these two cells are in the same component, and add $\{\Gamma(c),\Gamma(c')\}$ to $\mathcal{T}$.

If $c\in\mathcal{F}^*$, remove it from this set.
Then, add $c$ and $c'$ to $\mathcal{F}$.
Observe that $\mathcal{F}$ and $\mathcal{F}^*$ remain disjoint, so \ref{prop1} holds.
Moreover, note that the current component $\Gamma^*(c) = \Gamma^*(c')$ cannot be fully contained in $\mathcal{F}$ by \eqref{eq:newprop}.
If $|V(\Gamma^*(c))\setminus\mathcal{F}|=1$, add this remaining cell to $\mathcal{F}^*$.
Together with \eqref{eq:newprop2} and \ref{prop2} and \ref{prop3} applied to the original $\gamma$, this guarantees that \ref{prop2} and \ref{prop3} hold.
\end{enumerate}
Each iteration of step~\hyperlink{algostep2}{2} reduces the number of components of $\Gamma^*$ and $\mathcal{T}$ by one, so it follows that, after we perform all iterations, $\Gamma^*$ is connected and $\mathcal{T}$ is a tree (also, $\mathcal{F}^*$ is empty by \ref{prop3}).
Once this is achieved, let $\mathcal{A}_{\mathrm{d}}$ correspond precisely to the set of cells which contain one of the sets $T_c$ or $T_{c'}$ obtained during the iterations of step~\hyperlink{algostep2}{2}, and observe that $F_{\mathrm{d}}$ contains all vertices in the sets $T_c$ and $T_{c'}$.

Let $F\coloneqq F_{\mathrm{s}}\cup F_{\mathrm{d}}$ and note that, by construction, for each $c\in\mathcal{C}$ we have that $|F\cap c|\in\{0,2k\}$, with $|F\cap c|=2k$ if and only if $c\in\mathcal{A}_{\mathrm{s}}\cup\mathcal{A}_{\mathrm{d}}=\mathcal{F}$.
We now give a (cyclic) labelling to the vertices of $H_n$ lying in the cells of each component $\gamma$ of $\Gamma$ in such a way that any two vertices whose labels differ by at most $k$ will be joined by an edge of $G$ (that is, this order provides a $k$-th power of a cycle spanning all vertices in $\gamma$).
We represent said labelling by a directed cycle in $G$.
We make sure that, if there is any cell $c\in V(\gamma)\cap(\mathcal{A}_{\mathrm{s}}\cup\mathcal{A}_{\mathrm{d}})$, then the vertices in $c$ are ordered in such a way to facilitate either the incorporation of a vertex in some sparse cell or the grafting together of $k$-th powers of cycles in two different components, dependent on whether $c \in \mathcal{A}_{\mathrm{s}}$ or $c \in \mathcal{A}_{\mathrm{d}}$. 
For each component $\gamma$ of $\Gamma$, we proceed as follows.
\begin{enumerate}[label=\arabic*.,start=3]
\item \hypertarget{algostep3}{Let} $T\subseteq\Gamma$ be a spanning tree of $\gamma$.
In particular, $\Delta(T)<3^d$.
Fix an arbitrary cell $c_0\in V(\gamma)$.
Consider an arbitrary traversal of~$T$ which, starting at $c_0$, goes through every edge of $T$ twice and ends in the starting cell (for instance, this may be achieved by performing a depth-first search on $T$ with $c_0$ as a root).
This traversal takes $m\coloneqq2(|V(\gamma)|-1)$ steps, each step corresponding to an edge of $T$.
We use this traversal to construct a directed cycle $\mathfrak{C}(\gamma)$ as follows.

Choose a vertex $x_0\in (V(H_n)\cap c_0)\setminus F$ and let $P_0\coloneqq x_0$; this will be the beginning of a path which we will grow into $\mathfrak{C}(\gamma)$.
For notational purposes, set $V(P_{-1})\coloneqq\varnothing$.
We now inductively create paths $P_1,\ldots,P_m$, each of which is obtained by extending the previous path, and which satisfy that their endpoint $x_i\notin F$ and, for the cell $c$ containing $x_i$, $(V(P_i)\setminus V(P_{i-1}))\cap c=\{x_i\}$.
Note that these two properties also hold for the base case $i=0$.

For each $i\in[m]$ we extend $P_{i-1}$ to a path $P_i$ as follows.
Let~$c$ be the current cell in our traversal of $T$, and let $x_{i-1}\in (V(H_n)\cap c)\setminus F$ be the last vertex of $P_{i-1}$ (that is, $V(P_{i-1})\setminus V(P_{i-2})$ intersects $c$ precisely in $x_{i-1}$).
Let $c'$ be the next cell of the traversal.
Because $c$ and $c'$ are friends, every vertex in $c'$ is joined to every vertex in $c$ by an edge of~$G$.
Choose an arbitrary vertex $x_i\in (V(H_n)\cap c')\setminus(F\cup V(P_{i-1}))$.
Now consider the following cases.
\begin{enumerate}[label=(\roman*)]
    \item If this is the last time that~$c$ is visited in the traversal of~$T$ and $c\notin\mathcal{A}_{\mathrm{s}}\cup\mathcal{A}_{\mathrm{d}}$, let $P_i'$ be any path with vertex set $(V(H_n)\cap c)\setminus V(P_{i-2})$ having $x_{i-1}$ as a starting point, and let $P_i\coloneqq P_{i-1}P_i'x_i$.
    \item\label{case2} If this is the last time that~$c$ is visited in the traversal of~$T$ and $c\in\mathcal{A}_{\mathrm{s}}$, then there is some $v\in V(H_n)\cap\boldsymbol{s}$ such that $T_v\subseteq V(H_n)\cap c$.
    If so, let $P_i'$ be any path with vertex set $(V(H_n)\cap c)\setminus V(P_{i-2})$ having $x_{i-1}$ as a starting point and such that $T_v$, $T_v^1$ and $T_v^2$ span subpaths of $P_i'$, with $T_v^1$ coming before $T_v^2$ in $P_i'$.
    Then, let $P_i\coloneqq P_{i-1}P_i'x_i$.
    \item\label{case3} If this is the last time that~$c$ is visited in the traversal of~$T$ and $c\in\mathcal{A}_{\mathrm{d}}$, then in step~\hyperlink{algostep2}{2} we defined a set $T_c$.
    Let $P_i'$ be any path with vertex set $(V(H_n)\cap c)\setminus V(P_{i-2})$ having $x_{i-1}$ as a starting point and such that $T_c$, $T_c^1$ and $T_c^2$ span subpaths of $P_i'$, with $T_c^1$ coming before $T_c^2$ in $P_i'$.
    Then, let $P_i\coloneqq P_{i-1}P_i'x_i$.
    \item Otherwise, choose an arbitrary set $U_i\subseteq(V(H_n)\cap c)\setminus(F\cup V(P_{i-1}))$ of size $k-1$, let $P_i'$ be an arbitrary path on vertex set $U_i\cupdot\{x_{i-1}\}$ with $x_{i-1}$ as a starting point, and let $P_i\coloneqq P_{i-1}P_i'x_i$.
\end{enumerate}

To complete the cycle, let $P'$ be an $(x_m,x_0)$-path whose internal vertices are all vertices of $(V(H_n)\cap c_0)\setminus V(P_{m})$.
Furthermore, similarly to cases~\ref{case2} and~\ref{case3} above, if $c_0\in\mathcal{A}_{\mathrm{s}}$ (resp.\ $c_0\in\mathcal{A}_{\mathrm{d}}$), we make it so that the corresponding sets $T_v$, $T_v^1$ and $T_v^2$ (resp.\ $T_{c_0}$, $T_{c_0}^1$ and $T_{c_0}^2$) span subpaths of $P'$ with $T_v^1$ (resp.\ $T_{c_0}^1$) coming before $T_v^2$ (resp.\ $T_{c_0}^2$) in $P'$.
We then set $\mathfrak{C}(\gamma)\coloneqq P_m\cup P'$, directing (and thus labelling) $\mathfrak{C}(\gamma)$ by following $P_m$ from $x_0$ to $x_m$ and then following $P'$ from $x_m$ to $x_0$.
(Note there is a degenerate case when $m=0$.
In this case, by the initial definition of $\mathcal{F}^*$ and the process we followed in step~\hyperlink{algostep2}{2}, we must have $c_0\in\mathcal{A}_{\mathrm{d}}$.
Then, $\mathfrak{C}(\gamma)$ is instead a cycle covering all vertices of $V(H_n)\cap c_0$ such that the corresponding sets $T_{c_0}$, $T_{c_0}^1$ and $T_{c_0}^2$ span subpaths of $\mathfrak{C}(\gamma)$, and $\mathfrak{C}(\gamma)$ is oriented so that $T_{c_0}^1$ comes before $T_{c_0}^2$.)

Observe that every cell of $\gamma$ contains at least~$R=L+k\cdot3^d$ vertices and is visited at most $3^d$ times throughout the traversal; this, together with the fact that $|F\cap c|\leq 2k < L$ for all $c\in\mathcal{C}$, guarantees that we can choose vertices as described throughout the process.
\end{enumerate}

We can finally combine all the properties obtained throughout the previous three steps in order to obtain the $k$-th power of a Hamilton cycle.

First, for each $v\in V(H_n)\cap\boldsymbol{s}$, insert $v$ between $T_v^1$ and $T_v^2$ in the labelling of the cycle $\mathfrak{C}(\gamma)$ containing $T_v$.
For simplicity, for each component $\gamma$ of $\Gamma$ we redefine $\mathfrak{C}(\gamma)$ to denote the cycles resulting from these insertions.
Note that, by the definition of the sets $T_v$ and the fact that they are disjoint, inserting these vertices preserves the property that there is an edge of $H_n\cup G$ joining each vertex in $\mathfrak{C}(\gamma)$ to the next $k$ vertices in this cycle.
Thus, at this point, for each component $\gamma$ of $\Gamma$, the cycle $\mathfrak{C}(\gamma)$ yields a $k$-th power of a cycle on its vertex set.
Furthermore, the union of $\mathfrak{C}(\gamma)$ over all components $\gamma$ now contains \emph{all} the vertices of $H_n$.
It only remains to graft them together into a (unique) spanning cycle.

To achieve this, we first perform a depth-first search on $\mathcal{T}$ (the auxiliary tree constructed in step~\hyperlink{algostep2}{2}), rooted at an arbitrary component $\gamma_1$, to give a labelling $\gamma_1,\ldots,\gamma_t$ to the components.
We are going to use induction to obtain a sequence of directed cycles $\mathfrak{C}_1,\ldots,\mathfrak{C}_t$ such that, for each $j\in[t]$, the cycle $\mathfrak{C}_j$ satisfies $V(\mathfrak{C}_j)=\bigcup_{\ell\in[j]}V(\mathfrak{C}(\gamma_\ell))$ and that each of its vertices is joined by an edge of $H_n\cup G$ to each of the $k$ following vertices in $\mathfrak{C}_j$.
The base case $j=1$ holds trivially by setting $\mathfrak{C}_1\coloneqq\mathfrak{C}(\gamma_1)$.

Now, for some $j<t$, assume that $\mathfrak{C}_j$ satisfies the desired properties and we want to construct $\mathfrak{C}_{j+1}$.
Since $\mathcal{T}$ is a tree and the labelling is given by a DFS on $\mathcal{T}$, there is a unique edge in $\mathcal{T}$ joining $\gamma_{j+1}$ to some $\gamma_\ell$ with $\ell\leq j$.
This edge corresponds precisely to one pair of cells $c,c'\in\mathcal{C}_{\mathrm{d}}$ which contain sets $T_c$, $T_{c'}$, as per step~\hyperlink{algostep2}{2}.
Assume, without loss of generality, that $c\in\gamma_{j+1}$.
Now, simply insert the vertices of $\mathfrak{C}(\gamma_{j+1})$, in the order specified by this cycle, starting at $T_c^2$ and ending at $T_c^1$, into $\mathfrak{C}_j$, between $T_{c'}^1$ and $T_{c'}^2$.
Clearly, the resulting cycle $\mathfrak{C}_{j+1}$ contains all the desired vertices.
Furthermore, the property that each of its vertices is joined by an edge of $H_n\cup G$ to each of the $k$ following vertices in $\mathfrak{C}_{j+1}$ follows from the fact that this is true of both $\mathfrak{C}_{j}$ and $\mathfrak{C}(\gamma_{j+1})$, the fact that $(H_n\cup G)[T_c\cup T_{c'}]$ forms a complete graph, and the fact that the pairs of sets $T_c$, $T_{c'}$ are pairwise-disjoint.

After this process is finished, $\mathfrak{C}_t$ is a spanning cycle which yields the desired $k$-th power of a Hamilton cycle.
\end{proof}

\begin{remark}\label{rem:lp}
We can extend \Cref{thm:kpower} to\/ $\ell_p$-random geometric graphs\/ $G_p^d(n,r)$ for all\/ $1\leq p\leq\infty$.
To achieve this, it suffices to adjust the definition of\/ $s$ at the beginning of the proof, to guarantee that any vertex in a cell\/ $c$ will be joined by an edge to all vertices in the same cell or in cells which are friends of\/ $c$.
In particular, for any\/ $1\leq p\leq\infty$, it would suffice to take\/ $s=1/\lceil2d/r\rceil$.\COMMENT{This value is necessary for $p=1$.}
\end{remark}

\section{Threshold lower bounds}\label{sec:f-factor} 

The following simple observation will come in useful for proving the lower bounds of the thresholds.

\begin{proposition}\label{prop:fewedges}
Let\/ $d\geq1$ be an integer.
If\/ $r=o(n^{-1/d})$, then a.a.s.\ $G^d(n,r)$ has\/ $o(n)$ edges.
\end{proposition}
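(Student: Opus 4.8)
The plan is a routine first-moment computation followed by Markov's inequality. Write $r=r(n)=o(n^{-1/d})$, let $e(G)$ denote the number of edges of a graph $G$, and let $\kappa_d$ be the volume of the Euclidean unit ball in $\mathbb{R}^d$. First I would bound the probability that a fixed pair of vertices $\{i,j\}$ spans an edge of $G^d(n,r)$. Conditioning on the position $X_i=x$, the vertex $j$ is adjacent to $i$ precisely when $X_j$ lies in $B(x,r)\cap[0,1]^d$, a region of volume at most $\kappa_d r^d$; integrating over $x\in[0,1]^d$ gives $\mathbb{P}[\{i,j\}\in E(G^d(n,r))]\leq\kappa_d r^d$.

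Summing over all $\binom{n}{2}$ pairs then yields $\mathbb{E}[e(G^d(n,r))]\leq\binom{n}{2}\kappa_d r^d\leq\tfrac{1}{2}\kappa_d\,n\,(n^{1/d}r)^d$. Since $r=o(n^{-1/d})$ we have $n^{1/d}r=o(1)$, hence $(n^{1/d}r)^d=o(1)$, and therefore $\mathbb{E}[e(G^d(n,r))]=o(n)$. Set $g(n)\coloneqq\mathbb{E}[e(G^d(n,r))]$, so that $g(n)=o(n)$.

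Finally I would apply Markov's inequality. For every fixed $\varepsilon>0$ one has $\mathbb{P}[e(G^d(n,r))\geq\varepsilon n]\leq g(n)/(\varepsilon n)\to0$; equivalently, writing $f(n)\coloneqq\sqrt{g(n)n}$ — which satisfies $f(n)=o(n)$ because $g(n)=o(n)$ — Markov gives $\mathbb{P}[e(G^d(n,r))\geq f(n)]\leq g(n)/f(n)=\sqrt{g(n)/n}\to0$, so a.a.s.\ $G^d(n,r)$ has at most $f(n)=o(n)$ edges, as claimed.

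There is no real obstacle here: the only point needing (minimal) care is that the ball $B(x,r)$ may protrude from the cube $[0,1]^d$, but this only \emph{decreases} the relevant volume, so the bound $\kappa_d r^d$ holds without any boundary correction. For the $\ell_p$ variants mentioned in the remark following \Cref{thm:kpower}, one would instead use the volume of the $\ell_p$ unit ball, which changes only the constant factor and leaves the argument otherwise unchanged.
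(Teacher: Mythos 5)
Your proof is correct and follows the same route as the paper: a first-moment bound on the edge probability of order $r^d$, summed over pairs to get an expected edge count of $o(n)$, then Markov's inequality. The only difference is cosmetic — you give just the upper bound on the pair probability (which suffices) and are slightly more careful in spelling out what ``a.a.s.\ $o(n)$ edges'' means formally.
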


\begin{proof}
For any two distinct indices $i,j\in[n]$, we have that $\mathbb{P}[\{i,j\}\in E(G^d(n,r))]=\Theta(r^d)$, where the implied constant depends only on $d$.
(Indeed, upon conditioning on each possible value of $X_i$, the probability lies between $\theta_dr^d/2^d$ and $\theta_dr^d$, where $\theta_d$ is the volume of the $d$-dimensional sphere of radius $1$. 
Undoing the conditioning guarantees that the desired probability lies between these two values as well.)
Therefore, $\mathbb{E}[|E(G^d(n,r))|]=\binom{n}{2}\cdot\Theta(r^d)=o(n)$.
The statement now follows by Markov's inequality.\COMMENT{Being very formal: whatever function $r$ is, we can always find another function $s$ with $s=\omega(r)$ but $s=o(n^{-1/d})$ (e.g., we could take the geometric mean of the two functions).
Now, by Markov's inequality,
\[\mathbb{P}[|E(G^d(n,r))|\geq n^2s^d]\leq\frac{\mathbb{E}[|E(G^d(n,r))|]}{n^2s^d}=\frac{\theta(n^2r^d)}{n^2s^d}=o(1),\]
and $n^2s^d=o(n)$, so we are done.}
\end{proof}

We begin by proving \Cref{thm:threshold} in full.

\begin{proof}[Proof of \Cref{thm:threshold}]
Since \Cref{thm:kpower} establishes the upper bound for the $G^d(n,r)$-perturbed threshold, it suffices to provide a lower bound.
We propose the following construction.
Fix $k\in\mathbb{N}$ and $\alpha\in(0,k/(k+1))$.
Let $H_n$ be an $n$-vertex complete $(k+1)$-partite graph with parts $V(H_n)=A_1\cupdot\ldots\cupdot A_k\cupdot B$, where $|B|=\max\{0,(\alpha-1)k+1\}n$ and $A_1,\ldots,A_k$ have the same size.
Observe that $\delta(H_n)\geq\alpha n$ by construction. 

Now, the $k$-th power of a Hamilton cycle contains a collection of vertex-disjoint copies of $K_{k+1}$ covering all but at most $k$ of its vertices. We will show that a.a.s.\ $H_n \cup G^d(n,r)$ with $r = o(n^{-1/d})$ does not contain such a collection, which establishes the lower bound we require.\COMMENT{We have two cases to look at.
If $\alpha<1-1/k$, then this holds trivially, as $B$ is empty, so $H_n$ is a balanced complete $k$-partite graph, and so $\delta(H_n)=(k-1)n/k>\alpha n$.
Otherwise, every vertex in $B$ has degree $n-|B|=(1-\alpha)kn>\alpha n$ (indeed, we are assuming that $\alpha<k/(k+1)$) and every other vertex $v\in A_i$ has degree \[d(v)=n-|A_i|=n-\frac{n-\beta n}{k}=\frac{(k-1)n+((\alpha-1)k+1)n}{k}=\alpha n.\]}
Now, define $\beta \coloneqq k/(k+1) - \max\{\alpha,(k-1)/k\} > 0$.
Then, rearranging, we have $|B| = n/(k+1) - \beta kn$. 
Thus, since each copy of $K_{k+1}$ in $H_n$ must contain precisely one vertex in $B$, we have that
\begin{enumerate}[label=$(\ast)$]
\item\label{item:ast} every collection of vertex-disjoint copies of $K_{k+1}$ in $H_n$ has size at most $n/(k+1) - \beta kn$.
\end{enumerate}

Assume that there exists a collection $\mathcal{K}$ of vertex-disjoint copies of $K_{k+1}$ covering all but at most $k$ vertices of $H_n \cup G^d(n,r)$ with $r = o(n^{-1/d})$.
Then, by \ref{item:ast}, at least $\beta kn - k$ copies of $K_{k+1}$ in $\mathcal{K}$ must contain at least one edge of $E(G^d(n,r))$, so $|E(G^d(n,r))| \geq \beta kn - k$.
But a.a.s.\ $|E(G^d(n,r))| = o(n)$ by Proposition~\ref{prop:fewedges}, and $\beta kn - k = \theta(n)$ as $k$ and $\beta = \beta(\alpha, k)>0$ are fixed constants.
Thus, a.a.s.\ $H_n \cup G^d(n,r)$ with $r = o(n^{-1/d})$ does not contain a collection of vertex-disjoint copies of $K_{k+1}$ covering all but at most $k$ of its vertices, hence it a.a.s.\ does not contain the $k$-th power of a Hamilton cycle.
\COMMENT{The hidden constant above is a function of $\alpha$ and $k$.
This is not best possible. 
Here is the intuitive idea that makes the constant depend only on $\alpha$.
Say that we describe the power of a Hamilton cycle by labelling the vertices in the order in which they must appear in said cycle.
The closest we can get to the $k$-th power of a Hamilton cycle is by changing from one part to a different one every time (as otherwise we already have a pair of consecutive vertices whose edge is missing, and this would already give us the desired $\Theta(n)$).
Say that we always go from one part to the next, following the order $A_1,\ldots,A_k,B$ (again, otherwise there are some edges that are needed for the $k$-th power that would be missing, so this is the best that we can do).
Since $B$ is smaller than the other parts (indeed, this is trivial if $\alpha<1-1/k$, and otherwise it holds since $\alpha<k/(k+1)$), however, at some point we will need to start skipping it when we follow the order.
From this point on, we will be missing an edge between any vertex and the $k$-th subsequent vertex, needed to obtain the desired power of a Hamilton cycle.}
\end{proof}

We remark that our graph $H_n$ is not the only possible extremal construction for this problem.
Indeed, \citet{DRRS20} used a different construction to obtain the lower bound for the $G(n,p)$-perturbed threshold for the containment of the $k$-th power of a Hamilton cycle when $\alpha\in((k-1)/k,k/(k+1))$.
Their construction would yield the same result as ours.

Let us now turn our attention to $F$-factors.
Let $r \in \mathbb{N}$.
\citet{HS70} proved that any graph $G$ on $n$ vertices with minimum degree $(1-1/r)n$, and $n$ divisible by $r$, contains a $K_r$-factor.
(The case $r = 3$ was previously established by \citet{CH63}, and the case $r=2$ follows from Dirac's theorem~\cite{Dirac52}.)
Observe that the chromatic number $\chi(K_r)$ of the clique satisfies $\chi(K_r) = r$. 
For any graph $F$, \citet{AY96} showed that any $n$-vertex graph $H_n$ with minimum degree $\delta(H_n)\geq(1-1/\chi(F))n$, and $n$ divisible by $|V(F)|$ sufficiently large, contains an $F$-factor. 
Although this minimum degree condition is best possible for cliques and some other graphs, it is not best possible for all graphs $F$. To get the full picture we need to introduce another parameter.

For a graph $F$ with $\chi(F) \geq 2$, we define the \emph{critical chromatic number} of $F$ to be
\[\chi_{\mathrm{cr}}(F) \coloneqq (\chi(F) - 1)\frac{|V(F)|}{|V(F)| - \sigma(F)},\]
where $\sigma(F)$ is the size of the smallest possible vertex class over all $\chi(F)$-colourings of $F$. 
We also define an \emph{$F$-tiling} to be a collection of vertex-disjoint copies of $F$. 
\citet{Komlos00} proved that, for any $\eta > 0$, there exists an integer $n_0 = n_0(\eta, F)$ such that, if $H_n$ is a graph on $n\geq n_0$ vertices and has minimum degree at least $(1 - 1/\chi_{\mathrm{cr}}(F))n$, then $H_n$ contains an $F$-tiling covering all but at most $\eta n$ vertices\footnote{Informally, an \emph{almost perfect $F$-tiling} or an \emph{almost $F$-factor}.}. 
As K\"uhn and Osthus showed~\cite{KO09}, for many graphs $F$ it is $\chi_{\mathrm{cr}}(F)$, rather than $\chi(F)$, which is the parameter governing the best possible minimum degree condition for forcing an $F$-factor.
They also introduced the parameter $\operatorname{hcf}(F)$ that governs whether $\chi(F)$ or $\chi_{\mathrm{cr}}(F)$ is the correct such choice.
The definition of $\operatorname{hcf}(F)$ is fairly technical and so we will not define it formally here (see \cite{KO09} for the full definition).
Essentially, graphs $F$ with $\operatorname{hcf}(F) = 1$ have the ability to overcome certain divisibility barriers, that is, it is `easier' to extend an almost perfect $F$-tiling to an $F$-factor for such graphs.

\begin{theorem}[\citet{KO09}]\label{thm:ko}
    For every graph $F$ there exists a constant $C = C(F) \geq -1$ such that the following holds.
    Let $H_n$ be a graph on sufficiently many vertices $n$ which has minimum degree
    \[\delta(H_n)\geq(1 - 1/\chi^*(F))n + C,\]
    where $\chi^*(F) = \chi_{\mathrm{cr}}(F)$ if\/ $\operatorname{hcf}(F) = 1$ and $\chi^*(F) = \chi(F)$ otherwise.
    Moreover, assume $n$ is divisible by $|V(F)|$.
    Then, $H_n$ contains an $F$-factor.
\end{theorem}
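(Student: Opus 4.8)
The plan is to prove this by the regularity method combined with a stability dichotomy, following the strategy of \citet{KO09}. Write $r\coloneqq\chi(F)$ and $\sigma\coloneqq\sigma(F)$, and fix constants $0<\varepsilon\ll d\ll\beta\ll 1/|V(F)|$. First I would apply the Szemerédi regularity lemma to $H_n$ with parameter $\varepsilon$ and density cut-off $d$, obtaining a partition $V(H_n)=V_0\cupdot V_1\cupdot\dots\cupdot V_M$ with $|V_0|\leq\varepsilon n$ and $|V_1|=\dots=|V_M|$, and form the reduced graph $R$ on $\{1,\dots,M\}$ by joining $i$ and $j$ whenever $(V_i,V_j)$ is $\varepsilon$-regular of density at least $d$. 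A standard averaging argument transfers the degree hypothesis to the reduced graph, giving $\delta(R)\geq(1-1/\chi^*(F)-\varepsilon_1)M$ for some $\varepsilon_1\to0$.

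Next I would set up the dichotomy: declare $H_n$ to be \emph{extremal} if its vertex set admits a partition into at most $r$ parts that agrees, up to changing $\beta n^2$ adjacencies, with one of the tightness constructions for this minimum degree threshold (a \enquote{space barrier}, i.e.\ an almost-balanced blow-up of $K_{r-1}$ together with a slightly undersized part, or a \enquote{divisibility barrier}, i.e.\ a blow-up of $K_r$ whose class sizes lie in a forbidden residue class modulo $\operatorname{hcf}(F)$), and \emph{non-extremal} otherwise. In the non-extremal case, $R$ inherits non-extremality, and I would use the tiling theorem of \citet{Komlos00} (in its fractional $K_r$-tiling reformulation) on $R$ to produce a family of vertex-disjoint copies of $K_r$ in $R$ covering almost all clusters, with enough slack to absorb the few missed clusters and all of $V_0$ by a greedy argument that uses the $\Omega(n)$-sized neighbourhoods in $H_n$. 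The key point is to tune the cluster sizes inside each $K_r$-tuple: since $F$ is $r$-partite with smallest colour class of size $\sigma$, a blow-up of $K_r$ with class-size ratios matching those of a $\chi(F)$-colouring of $F$ has a perfect $F$-factor, and when $\operatorname{hcf}(F)=1$ the non-extremality provides exactly the freedom needed to make every size divisible as required (so $\chi_{\mathrm{cr}}(F)$ suffices), whereas when $\operatorname{hcf}(F)\neq1$ one instead works with genuine $K_r$-factors, which is why $\chi^*(F)=\chi(F)$ there. A final application of the blow-up lemma inside each cluster-tuple then assembles the $F$-factor.

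In the extremal case the regularity method is too lossy, so I would argue directly from the explicit near-extremal structure of $H_n$, exploiting also the additive constant $C$ in the degree hypothesis: partition $V(H_n)$ into blocks of the precise sizes needed to host copies of $F$, relocate the few \enquote{misplaced} vertices across parts one at a time using their large neighbourhoods, and complete the last copies of $F$ greedily. (It is also in this case that the value $C=C(F)\geq-1$ is pinned down, since here the barrier constructions are exactly the graphs whose minimum degree is just one below the threshold and that still fail to have an $F$-factor.)

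The main obstacle is the book-keeping in the non-extremal case that yields the sharp threshold $(1-1/\chi^*(F))n+C$ rather than the cruder $(1-1/\chi(F))n$: one must simultaneously choose the $K_r$-tiling of $R$, distribute the leftover clusters and $V_0$ among its members, and adjust all the resulting cluster sizes so that each $K_r$-blow-up in the tiling carries a \emph{perfect} $F$-packing. It is precisely this size-matching/divisibility step where $\sigma(F)$, $\chi_{\mathrm{cr}}(F)$ and $\operatorname{hcf}(F)$ all enter, and where the gap between $\chi_{\mathrm{cr}}(F)$ and $\chi(F)$ is either bridged (when $\operatorname{hcf}(F)=1$) or shown to be a genuine obstruction forcing one back to $\chi(F)$ (when $\operatorname{hcf}(F)\neq1$).
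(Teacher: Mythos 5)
The theorem you were asked to prove is imported verbatim from K\"uhn and Osthus~\cite{KO09} and is used by this paper as a black-box ingredient; the manuscript contains no proof of it, so there is no in-paper argument to compare your sketch against. Your outline --- Szemer\'edi regularity and a reduced graph, an extremal/non-extremal dichotomy separating space barriers from divisibility barriers, Koml\'os's almost-tiling theorem on the reduced graph, and the blow-up lemma to turn a $K_r$-tiling of cluster-tuples into a genuine $F$-factor --- is a faithful high-level description of the K\"uhn--Osthus strategy, and you correctly identify the hard part: the simultaneous size-matching across all $K_r$-tuples, which is where $\sigma(F)$, $\chi_{\mathrm{cr}}(F)$ and $\operatorname{hcf}(F)$ enter, and where one either bridges the gap between $\chi_{\mathrm{cr}}(F)$ and $\chi(F)$ (when $\operatorname{hcf}(F)=1$) or finds it to be a genuine obstruction.

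However, as written this is a strategy outline rather than a proof: the divisibility analysis, the absorption of $V_0$ and of the leftover clusters into the tiling, and the entire extremal case are all deferred to gestures, and those are exactly the steps that make the theorem nontrivial and occupy the bulk of~\cite{KO09}. In particular, the size adjustment cannot be done independently inside each $K_r$-tuple as your sketch suggests; it must be coordinated globally across the tiling, and this is where $\operatorname{hcf}(F)$ really does its work. If the purpose was to reproduce the cited result, the sketch would need to be filled in substantially; if the purpose was only to indicate why the cited theorem is plausible and how it is proved, it does so adequately.
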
 

Note that the additive constant in the minimum degree condition is necessary for some graphs $F$ (this can be seen by modifying examples in \cite{AF99, Komlos00}).

We now show that $n^{-1/d}$ is also a $G^d(n,r)$-perturbed threshold for containing an $F$-factor (for $\alpha \in (0, 1 - 1/\chi_{\mathrm{cr}}(F))$).

\begin{theorem}\label{thm:f-factorperturbed}
    Let $F$ be a fixed graph.
    For every integer $d \geq 1$ and $\alpha \in (0, 1 - 1/\chi_{\mathrm{cr}}(F))$, for $n$ divisible by $|V(F)|$, we have that $n^{-1/d}$ is a $G^d(n,r)$-perturbed threshold for containing an $F$-factor.
\end{theorem}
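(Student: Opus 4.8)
The plan is to prove the two directions separately. The upper bound requires nothing new: if $r=\omega(n^{-1/d})$ then $r\geq(C/n)^{1/d}$ for all sufficiently large $n$, where $C=C(d,\alpha,F)$ is the constant supplied by \Cref{coro1} (applied with the given $d$ and with $\alpha$, which lies in $(0,1)$). Since $|V(F)|$ divides $n$ by hypothesis, \Cref{coro1} gives at once that a.a.s.\ $H_n\cup G^d(n,r)$ contains an $F$-factor for every $H_n$ with $\delta(H_n)\geq\alpha n$. So it only remains to establish the matching lower bound.

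For the lower bound I would follow the strategy of the proof of \Cref{thm:threshold}: exhibit a single graph $H_n$ with $\delta(H_n)\geq\alpha n$ that is ``linearly far'' from having an $F$-factor, and then invoke \Cref{prop:fewedges}. Write $h\coloneqq|V(F)|$, $\chi\coloneqq\chi(F)$ (note $\chi\geq2$, as otherwise the interval for $\alpha$ is empty and there is nothing to prove) and $\sigma\coloneqq\sigma(F)$. The identity $1-1/\chi_{\mathrm{cr}}(F)=\tfrac{(\chi-2)h+\sigma}{(\chi-1)h}$ shows that $\alpha<1-1/\chi_{\mathrm{cr}}(F)$ is equivalent to $\max\{0,(\chi-1)\alpha-(\chi-2)\}<\sigma/h$, so we may fix a small $\eta>0$ with $\max\{0,(\chi-1)\alpha-(\chi-2)\}+\eta<\sigma/h$. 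Let $H_n$ be the complete $\chi$-partite (space-barrier) graph with parts $Y,Z_1,\dots,Z_{\chi-1}$, where $|Y|\coloneqq\lceil(\max\{0,(\chi-1)\alpha-(\chi-2)\}+\eta)n\rceil$ and $Z_1,\dots,Z_{\chi-1}$ are as equal as possible; this is essentially the extremal example of \citet{Komlos00}. A short computation gives $\delta(H_n)=n-\max_i|Z_i|$, and since $n-(n-|Y|)/(\chi-1)\geq\alpha n$ is equivalent to $|Y|\geq((\chi-1)\alpha-(\chi-2))n$ (which holds, with room to spare, by the choice of $|Y|$), we get $\delta(H_n)\geq\alpha n$; the degenerate case $(\chi-1)\alpha-(\chi-2)\leq0$ (only possible for $\chi\geq3$) is handled by the cruder bound $\delta(H_n)\geq(\chi-2)n/(\chi-1)\geq\alpha n$.

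The key structural claim is that every copy of $F$ in $H_n$ uses at least $\sigma$ vertices of $Y$. Given such a copy, let $Y'$ and $Z'$ be its vertex sets inside $Y$ and inside $Z_1\cup\dots\cup Z_{\chi-1}$, respectively. As $Y$ is independent in $H_n$, $Y'$ induces no edge of $F$; as $Z_1\cup\dots\cup Z_{\chi-1}$ spans a complete $(\chi-1)$-partite graph in $H_n$, $F[Z']$ is $(\chi-1)$-colourable. Colouring $F[Z']$ with $\chi-1$ colours and $Y'$ with one further colour yields a proper $\chi$-colouring of $F$, which forces $Y'\neq\varnothing$ (else $\chi(F)\leq\chi-1$), and by the definition of $\sigma(F)$ every class of a proper $\chi$-colouring, in particular $Y'$, has size at least $\sigma$. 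Now, if $G'$ is any graph on $V(H_n)$ with $o(n)$ edges and $H_n\cup G'$ admits an $F$-factor, then its $n/h$ copies are pairwise vertex-disjoint, hence pairwise edge-disjoint, so at most $|E(G')|=o(n)$ of them use an edge of $G'$; the remaining at least $n/h-o(n)$ copies lie in $H_n$ and thus each use $\geq\sigma$ vertices of $Y$, so together they use at least $\sigma(n/h-o(n))>|Y|$ vertices of $Y$ for $n$ large, a contradiction. By \Cref{prop:fewedges}, if $r=o(n^{-1/d})$ then a.a.s.\ $G^d(n,r)$ has $o(n)$ edges, so a.a.s.\ $H_n\cup G^d(n,r)$ has no $F$-factor, giving the lower bound and completing the proof.

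The main obstacle is nailing down the construction and the attendant arithmetic: verifying the equivalence $\alpha<1-1/\chi_{\mathrm{cr}}(F)\iff\max\{0,(\chi-1)\alpha-(\chi-2)\}+\eta<\sigma/h$, checking the minimum degree (including the degenerate cases $\chi=2$ and $(\chi-1)\alpha\leq\chi-2$, and absorbing rounding into the slack $\eta n$), and proving the ``$\sigma$ vertices of $Y$ per copy'' claim cleanly. Once these are in place, the counting argument ruling out $o(n)$ extra edges, via vertex-disjointness of the factor's copies, and the appeal to \Cref{prop:fewedges} are routine.
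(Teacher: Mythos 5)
Your proof is correct and follows essentially the same strategy as the paper: the upper bound via \Cref{coro1}, and the lower bound via Komlós's complete $\chi(F)$-partite space barrier (with an undersized part that forces any copy of $F$ inside $H_n$ to use at least $\sigma(F)$ vertices of that part), combined with \Cref{prop:fewedges}. The only cosmetic difference is your use of the slack $\eta$ in sizing the small part (the paper takes $\beta := \max\{0,(k-1)\alpha-(k-2)\}$ exactly, allowing $B=\varnothing$); you also spell out the verification of the ``$\geq\sigma$ vertices of $Y$ per copy'' claim and the edge-disjointness counting, which the paper leaves terse.
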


To prove \Cref{thm:f-factorperturbed}, we will use \Cref{prop:fewedges} together with an adaptation of an extremal example of \citet{Komlos00}.
This generalises the construction used to prove the lower bound for \Cref{thm:threshold}.

\begin{proof}[Proof of \Cref{thm:f-factorperturbed}]
Since \Cref{coro1} establishes the upper bound for the $G^d(n,r)$-perturbed threshold, it suffices to provide the lower bound.
Let $k \coloneqq \chi(F)$ and $\sigma \coloneqq \sigma(F)$, and define $\gamma\coloneqq\min\{1/(k-1),1-\alpha\}$ and $\beta\coloneqq1-(k-1)\gamma$.
The bounds on $\alpha$ in the statement of Theorem~\ref{thm:f-factorperturbed} guarantee that $\gamma \in ({1}/{\chi_{\mathrm{cr}}(F)},1/({k-1})]$ and $\beta \in [0, {\sigma}/{|V(F)|})$.\COMMENT{The bounds on $\gamma$ follow trivially. To check now the upper bound on $\beta$, we need to use the definition of the critical chromatic number: since $\gamma>1/\chi_{\mathrm{cr}}(F)$, we have
\[\beta=1-(k-1)\gamma=1-(\chi(F)-1)\gamma<1-\frac{\chi(F)-1}{\chi_{\mathrm{cr}}(F)}=1-\frac{|V(F)|-\sigma}{|V(F)|}=\frac{\sigma}{|V(F)|}.\]}
Note that $(\beta + (k-1)\gamma)n = n$.
Let $H_n$ be the complete $k$-partite graph with parts $V(H_n)=A_1\cupdot\ldots\cupdot A_{k-1}\cupdot B$, where $|B|=\beta n$ and $|A_1|=\ldots=|A_{k-1}|=\gamma n$.
Observe that $\delta(H_n) \geq \alpha n$. Since every copy of $F$ must contain at least $\sigma$ vertices in $B$, we have that 
\begin{enumerate}[label=$(\ast\ast)$]
\item\label{item:ast2} every collection of vertex-disjoint copies of $F$ in $H_n$ has size at most $\beta n/\sigma$.
\end{enumerate}

Suppose that $r = o(n^{-1/d})$.
Assume that $H_n \cup G^d(n,r)$ contains an $F$-factor $\mathcal{F}$.
Then, by \ref{item:ast2}, at least $(1/|V(F)| - \beta/\sigma)n$ copies of $F$ in $\mathcal{F}$ must contain at least one edge of $E(G^d(n,r))$.
Set $\delta \coloneqq 1/|V(F)| - \beta/\sigma$ and note that $\delta > 0$.
Thus $|E(G^d(n,r))| \geq \delta n$.
But a.a.s.\ $|E(G^d(n,r))| = o(n)$ by Proposition~\ref{prop:fewedges}, and $\delta n = \theta(n)$ since $\delta \coloneqq \delta(\alpha, F) > 0$ is a fixed constant.
Therefore, a.a.s.\ $H_n \cup G^d(n,r)$ with $r = o(n^{-1/d})$ does not contain an $F$-factor.
\end{proof}

Reflecting on how the minimum degree condition in \Cref{thm:ko} depends on either $\chi(F)$ or $\chi_{\mathrm{cr}}(F)$, we ask the following question.
\begin{question}
    Let $F$ be a graph with $\chi(F) \neq \chi_{\mathrm{cr}}(F)$ and $\operatorname{hcf}(F) \neq 1$.
    For $\alpha \in [1-1/\chi_{\mathrm{cr}}(F), 1-1/\chi(F))$, what is the $G^d(n,r)$-perturbed threshold for containing an $F$-factor?
\end{question}

\section*{Acknowledgement}

We are grateful to an anonymous referee for their helpful comments on this manuscript.

\bibliographystyle{mystyle} 
\bibliography{geopert}


\end{document}